\newtheorem{tm}{Theorem}
\newtheorem{proposition}[tm]{Proposition}
\newtheorem{lemma}[tm]{Lemma}
\theoremstyle{definition}
\theoremstyle{remark}
\newtheorem*{acknowledgments}{Acknowledgments}
\newtheorem{remark}[tm]{Remark}
\newcommand{\R}{\mathbb{R}}
\newcommand{\Q}{\mathbb{Q}}
\newcommand{\Qbar}{\overline{\Q}}
\newcommand{\C}{\mathbb{C}}
\newcommand{\Z}{\mathbb{Z}}
\newcommand{\F}{\mathbb{F}}
\newcommand{\PP}{\mathbb{P}}
\newcommand{\HH}{\mathbb{H}}
\newcommand{\cC}{\mathcal{C}}
\newcommand{\cL}{\mathcal{L}}
\newcommand{\an}{\mathrm{an}}
\newcommand{\dd}{\mathrm{d}}
\newcommand{\EisE}{\mathop{\mathbb{E}\null}\nolimits}
\DeclareMathOperator{\Gal}{Gal}
\DeclareMathOperator{\GL}{GL}
\DeclareMathOperator{\SL}{SL}
\DeclareMathOperator{\Norm}{Norm}
\DeclareMathOperator{\Res}{Res}
\DeclareMathOperator{\Spec}{Spec}
\DeclareMathOperator{\rk}{rk}
\DeclareMathOperator{\Si}{S}
\DeclareMathOperator{\SQ}{SQ}
\def\isom{\buildrel\sim\over\longrightarrow}
\def\lsup#1#2{{}^{#1}\!{#2}}
\def\smallmat#1#2#3#4{\bigl(\begin{smallmatrix} #1 & #2 \\
  #3 & #4 \end{smallmatrix}\bigr)}
\begin{document}

\title[Quadratic points on modular curves]{Hyperelliptic modular curves $X_0(n)$ and isogenies of elliptic curves over quadratic fields}
\author{Peter Bruin}
\address{Mathematics Institute\\ Zeeman Building\\ University of Warwick\\ Coventry CV4 7AL\\ United Kingdom}
\curraddr{Universiteit Leiden\\ Mathematisch Instituut\\ Postbus 9512\\ 2300 RA \ Leiden\\ Netherlands}
\email{P.J.Bruin@math.leidenuniv.nl}
\author{Filip Najman}
\address{Department of Mathematics\\ University of Zagreb\\ Bijeni\v cka cesta 30\\ 10000 Zagreb\\ Croatia}
\email{fnajman@math.hr}

\begin{abstract}
We study elliptic curves over quadratic fields with isogenies of certain degrees. Let $n$ be a positive integer such that the modular curve $X_0(n)$ is hyperelliptic of genus $\ge2$ and such that its Jacobian has rank~$0$ over~$\Q$.
We determine all points of $X_0(n)$ defined over quadratic fields, and we give a moduli interpretation of these points.
We show that, with finitely many exceptions up to $\Qbar$-isomorphism, every elliptic curve over a quadratic field~$K$ admitting an $n$-isogeny is $d$-isogenous, for some $d\mid n$, to the twist of its Galois conjugate by a quadratic extension $L$ of~$K$; we determine $d$ and~$L$ explicitly, and we list all exceptions.
As a consequence, again with finitely many exceptions up to $\Qbar$-isomorphism, all elliptic curves with $n$-isogenies over quadratic fields are in fact $\Q$-curves.
\end{abstract}

\maketitle

\section{Introduction}

The study of possible torsion groups of elliptic curves over number fields has seen a lot of progress in the last few decades. See, just to mention some, \cite{maz1} for results over $\Q$, \cite{Kam1,km} for results over quadratic fields, \cite{jks, naj} for results over cubic fields, and \cite{jkp} for results over quartic fields. We also mention the uniform boundness conjecture, proved by Merel \cite{mer}, which states that the order of the torsion group is bounded from above by an integer depending only on the degree of the number field.

Unfortunately, much less is known about the possible degrees of isogenies of elliptic curves over number fields.  A complete classification of possible isogeny degrees is only known over $\Q$. Mazur \cite{maz2} found all the isogenies of prime degree and gave a list of isogeny degrees which he believed to be complete, after which Kenku proved in a series of papers \cite{ken1,ken2,ken3,ken4} that the list is indeed complete.

The goal of this paper is to classify elliptic curves over quadratic fields with isogenies of certain degrees~$n$ and to investigate their properties.  We will try to obtain as much information as possible on these curves.  In particular, for many values of~$n$, we classify all such curves and explicitly describe the isogenies.

We say that an elliptic curve $E$ over a field~$K$ has an \emph{$n$-isogeny\/} if it has an isogeny with cyclic kernel of order~$n$ defined over~$K$.  As usual, the (compact) modular curve over~$\Q$ classifying elliptic curves with an $n$-isogeny will be denoted by $X_0(n)$, and the Jacobian of $X_0(n)$ will be denoted by $J_0(n)$.

To classify elliptic curves over quadratic fields admitting an $n$-isogeny, we determine all quadratic points on the curves $X_0(n)$ for $n$ such that $X_0(n)$ is hyperelliptic of genus at least 2 and such that the group of $\Q$-rational points of $J_0(n)$ is finite.  These assumptions are satisfied if and only if
$$
n \in \{ 22, 23, 26, 28, 29, 30, 31, 33, 35, 39, 40, 41, 46, 47, 48, 50, 59, 71 \}.
$$
Throughout this paper, unless stated otherwise, $n$ will denote an integer in the above set.

\begin{remark}
The curve $X_0(37)$ is also hyperelliptic, but $J_0(37)(\Q )$ has positive rank.
\end{remark}

After obtaining all the quadratic points on the aforementioned modular curves, we study their moduli interpretation and derive interesting consequences from this.  Quadratic points on $X_0(n)$ (except for the cusps) correspond to $\Qbar$-isomorphism classes of elliptic curves over quadratic fields with an $n$-isogeny.  Hence studying these points gives us information about the properties of such elliptic curves.

Taking inverse images of $\Q$-points under the hyperelliptic map $X_0(n)\to\PP^1(\Q)$ gives an infinite set of points of $X_0(n)$ over quadratic fields.  Apart from these, there are only finitely many quadratic points; we call these quadratic points \emph{exceptional}.

We show that for all elliptic curves over quadratic fields $K$ with a $28$- or $40$-isogeny, the quadratic field $K$ is real, except in a few explicitly listed cases.

Recall that a \emph{$\Q$-curve} over a number field $K\subset\Qbar$ is an elliptic curve $E$ over~$K$ that is $\Qbar$-isogenous to all of its Galois conjugates $\lsup\sigma E$ for $\sigma\in\Gal(\Qbar/\Q)$.  If $K$ is quadratic and $\sigma$ is the non-trivial automorphism of~$K$, then $E$ is a $\Q$-curve if and only if $E$ is $\Qbar$-isogenous to $\lsup\sigma E$.  We prove that all elliptic curves over quadratic fields obtained from points of $\PP^1(\Q)$ as above are in fact $\Q$-curves.  The fact that there are only finitely many exceptional quadratic points means that, up to $\Qbar$-isomorphism, there are only finitely many elliptic curves over quadratic fields with an $n$-isogeny that are not $\Q$-curves.

Let $E$ be a $\Q$-curve without complex multiplication over a quadratic field~$K$ with an $n$-isogeny, arising from the inverse image of a point of $\PP^1(\Q)$ as above.  Then $E$ is not in general $K$-isogenous to its Galois conjugate~$\lsup\sigma E$, but it is by construction $K$-isogenous to a quadratic twist of $\lsup\sigma E$.  It follows that $E$ becomes isogenous to $\lsup\sigma E$ after base change to a quadratic extension $L$ of~$K$.  We explicitly construct such an extension~$L$.  Moreover, we prove that $E$ has even rank over~$L$.

Similar results were proved in \cite{bbdn} for elliptic curves $E$ over quadratic fields~$K$ with torsion group $\Z/n\Z$ for $n\in\{13,16,18\}$.  These follow from arithmetic properties of the relevant modular curves $X_1(n)$.  In \cite{bbdn} it was also proved that for $n=13$ and $n=18$, the endomorphism ring of the restriction of scalars $\Res_{K /\Q} E$ of such a curve~$E$ contains an order in a quadratic field, which implies that $\Res_{K/\Q} E(\Q)\simeq E(K)$ has even rank.  We say that $E$ has \emph{false complex multiplication}; see \cite{bbdn} for a precise definition.  In all the cases we consider, our construction implies that $E$ has false complex multiplication over~$L$ in the sense of~\cite{bbdn}.

\begin{remark}
The difficulty that $E$ is not $K$-isogenous to $\lsup\sigma E$, but only to a twist of~$\lsup\sigma E$, does not happen for the elliptic curves with prescribed torsion studied in~\cite{bbdn}.  This is explained by the fact that $X_0(n)$ is only a coarse moduli space, while $X_1(n)$ is a fine moduli space.
\end{remark}

We now give an overview of the structure of the paper.

In Section \ref{sec:qp}, we compute $J_0(n)(\Q)$ and describe all the quadratic points on $X_0(n)$. We apply these results to show that for $n=28$ and $40$ almost all elliptic curves with $n$-isogenies are defined over real quadratic fields (Theorem~\ref{tm:fd}). We also prove that the largest $k$ such that there exist infinitely many $K$-isogeny classes over quadratic fields $K$ containing $k$ curves is $k=16$ (Theorem~\ref{tm:number_isogenies}).

In Section \ref{sec:interpretation_hi}, we give a moduli interpretation of the action on~$X_0(n)$ of the normalizer of $\Gamma_0(n)$ in $\GL_2(\Q)^+$, and in particular of the hyperelliptic involution~$\iota$.  This is well known in the case where $\iota$ is an Atkin--Lehner involution, but we have chosen to present this material from a general perspective that also encompasses the non-Atkin--Lehner involutions in the cases $n=40$ and $48$.

In Sections \ref{sec:modular_forms} and \ref{sec:moduli-interpretation}, we show that the non-exceptional quadratic points on $X_0(n)$ give rise to $\Q$-curves (Theorem~\ref{thm:even-rank}).  We also show how to compute the fields of definition of the isogenies between the conjugates of such $\Q$-curves.  This allows us to describe over which fields these elliptic curves acquire false complex multiplication, and by which rings.

In Section \ref{sec:exceptions}, we list all our computational results and give a moduli interpretation of the exceptional points.

The computer calculations were done using Magma \cite{mag}.  In particular, we made use of plane models of $X_0(n)$ given by polynomials in $\Q[x,y]$, and the $q$-expansions of the modular functions $x$ and~$y$ for every $n$.  These data were obtained from M. Harrison's Small Modular Curve database, which is included in Magma.

\begin{acknowledgments}
We wish to thank Samir Siksek for enlightening discussions on this topic, and Chi-Yun Hsu, Ariyan Javanpeykar and Drew Sutherland for pointing out mistakes in earlier versions of the paper. We thank the referees for their comments, which greatly improved the exposition in the paper. Finally, we would like to thank Tarun Dalal, who found a number of mistakes in the article after it was published.
\end{acknowledgments}

\section{Quadratic points on $X_0(n)$}
\label{sec:qp}

In this section we describe the set of all quadratic points on $X_0(n)$. We write $X=X_0(n)$ and $J=J_0(n)$.  We write $\iota$ for the hyperelliptic involution of $X$, and we fix a hyperelliptic map $x\colon X\to\PP^1$ and a cusp $C\in X(\Q)$.

Using Magma's functionality for $2$-descent \cite{sto}, one proves in all the cases that we consider that $J$ has rank~$0$ over $\Q$. One could alternatively prove that $J(\Q)$ has rank~$0$ using $L$-functions \cite[Section 3.10]{ste}. Thus in this section $J(\Q)$ is finite.

\subsection{Finding the quadratic points}
\label{subsec:finding-qp}

We first observe that finding the quadratic points on $X$ amounts to finding the rational points on the symmetric square $X^{(2)}$ of $X$, which classifies effective divisors of degree~2 on~$X$. The map
$$
\begin{aligned}
\phi\colon X^{(2)} &\longrightarrow J\\
D &\longmapsto [D-C-\iota(C)]
\end{aligned}
$$
is an isomorphism onto its image, except above $0$, and $\phi^{-1}\{0\}$ is isomorphic to $\PP^1$.
We have
$$
X^{(2)}(\Q)=\phi^{-1}\{0\}(\Q) \cup \phi^{-1}(J(\Q)\setminus\{0\}).
$$
Note that $\phi^{-1}(J(\Q)\setminus\{0\})$ is finite (since $J(\Q)$ is finite by assumption), so the set of exceptional quadratic points on~$X$ is finite.  Moreover, this set is easy to compute from $J(\Q)$; we will do this explicitly in Section~\ref{sec:exceptions}.

Let $K$ be a quadratic field, let $\sigma$ be the generator of $\Gal(K/\Q)$, and let $P$ be a point in $X(K)\setminus X(\Q)$ with rational $x$-coordinate. Then $\iota$ acts on $P$ in the same way as $\sigma$. Let $E$ be an elliptic curve over~$K$ in the $\Qbar$-isomorphism class of elliptic curves (together with a subgroup) corresponding to~$P$. As we will see, the moduli interpretation of~$\iota$ (for example, $\iota$ is an Atkin--Lehner involution $w_d$ for most values of~$n$) implies that the curves $E$ and $\lsup\sigma E$ are $\Qbar$-isogenous. Thus $E$ is a $\Q$-curve.

This construction is similar to the one with hyperelliptic modular curves $X_1(n)$ \cite[Section 4]{bbdn}, but there is a fundamental difference.  Namely, the curves $X_1(n)$ are fine moduli spaces, while $X_0(n)$ are only coarse moduli spaces. Consequently, the $\Q$-curves obtained from $X_1(n)$ in~\cite{bbdn} are $K$-isogenous to their Galois conjugates, while our curves obtained from $X_0(n)$ are only $K$-isogenous to their Galois conjugates up to twist, meaning that $E$ is $K$-isogenous to a twist of $\lsup\sigma E$, or in other words that $E$ and $\lsup\sigma E$ are $\Qbar$-isogenous.  We refer to Elkies~\cite[Section~3]{elk} for a detailed exposition of the relevant properties of the curves $X_0(n)$.

By what we have seen, determining $X^{(2)}(\Q)$ amounts to determining the finite group $J(\Q)$, which we will do in the next proposition. Note that for all curves $X$ that we consider, $X(\Q)$ consists entirely of cusps.  The \emph{cuspidal subgroup} of $J(\Q)$, denoted by $C_J$, is the subgroup of $J(\Q)$ generated by the classes of the divisors $P_1 - P_2$, where $P_1$, $P_2$ are $\Q$-rational cusps of $X$.

\begin{proposition}
Let $n$ be an integer such that $X_0(n)$ is hyperelliptic of genus at least~$2$ and such that $J_0(n)(\Q)$ has rank~$0$. Then $J_0(n)(\Q)$ is equal to its cuspidal subgroup $C_J$.
\end{proposition}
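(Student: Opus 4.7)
The containment $C_J \subseteq J(\Q)$ is automatic: the rational cusps of $X_0(n)$ give $\Q$-rational divisors, and by the Manin--Drinfeld theorem their difference classes are torsion in $J$. Since $J(\Q)$ is finite by hypothesis, it suffices to prove the reverse inequality $|J(\Q)| \le |C_J|$.

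For the upper bound I would invoke the standard good-reduction argument. For any prime $p \nmid n$ the Jacobian $J$ has good reduction at~$p$, and reduction modulo~$p$ induces an injection on prime-to-$p$ torsion, so $|J(\Q)_{\tors}|$ divides $\gcd\bigl(|J(\F_{p_1})|,|J(\F_{p_2})|\bigr)$ for any two coprime primes $p_1,p_2$ of good reduction. The cardinalities $|J(\F_p)|$ can be read off from the characteristic polynomial of Frobenius on the $\ell$-adic Tate module of~$J$; this polynomial is encoded in the Hecke eigenvalues of the newforms in $S_2(\Gamma_0(n))$ and is directly computable in Magma. Taking the $\gcd$ over a few small primes of good reduction should yield an upper bound matching $|C_J|$. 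In parallel, $|C_J|$ itself is computed from the $\Q$-rational cusps via modular symbols, using that each divisor $P_1 - P_2$ on a pair of rational cusps has a computable order in $J$.

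The argument is then a case-by-case verification over the $18$ admissible values of~$n$. The main obstacle I anticipate is not conceptual but practical: for some $n$ the $\gcd$ of $|J(\F_p)|$ over the first handful of primes may still exceed $|C_J|$ by a small factor, requiring more primes or a separate check at an odd prime to rule out spurious $p$-power torsion in the kernel of reduction. Since $n \le 71$ and $J_0(n)$ has moderate dimension, all of this is well within reach of Magma's modular-symbols machinery, and one expects each case to reduce to a short computation.
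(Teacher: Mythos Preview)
Your overall strategy---bound $|J(\Q)|$ from above via reduction modulo primes of good reduction and compare with $|C_J|$---is exactly what the paper does, and it succeeds immediately for thirteen of the eighteen values of~$n$. The gap is in your handling of the remaining five.

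For $n\in\{30,33,39,46,48\}$ the $\gcd$ of the orders $|J(\F_p)|$ over \emph{all} good primes~$p$ is strictly larger than $|C_J|$, by a factor of $2$ or~$4$. This is not an artifact of taking too few primes: the excess $2$-power in $|J(\F_p)|$ is systematic, coming from Weierstrass points that become rational modulo~$p$ even when they are not rational over~$\Q$. So ``more primes'' will never close the gap, and ``a separate check at an odd prime'' is already what you are doing---the prime-to-$p$ torsion (in particular the $2$-part) already injects into $J(\F_p)$ for odd~$p$, yet the bound still overshoots.

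The paper resolves these cases with genuinely additional ideas that your proposal does not anticipate. For $n=33$ and $39$ it compares the \emph{group structures} of $J(\F_p)$ at two primes, not just the orders, to pin down $J(\Q)$. For $n=30$, $46$, and $48$ even that is insufficient; one must compute $J(\Q)[2]$ directly by determining the Galois action on the Weierstrass points and taking invariants. For $n=48$ a further trick is needed: one shows that the map $C_J/2C_J\to J(\F_5)/2J(\F_5)$ is injective, and combines this with $J(\Q)[2]\subseteq C_J$ to rule out any point of $J(\Q)$ outside $C_J$. Your proposal treats the obstacle as merely practical, but for these five levels it is conceptual, and the missing ingredients are precisely the ones above.
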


\begin{proof}
One can obtain an upper bound on the size of $J(\Q)$ by using the fact that, for a prime~$p$ of good reduction, the prime-to-$p$ part of $J(\Q)$ injects into $J(\F_p)$. The function \textsf{TorsionBound} in Magma does exactly this. In all cases except $n\in\{30,33,39,46,48\}$, this immediately shows that $C_J=J(\Q)$.

For $n\in\{30,33,39,46,48\}$, the bound obtained in this way is unfortunately larger than the order of the cuspidal subgroup.  For $n=30$ and $n=48$, we only obtain that the index $(J(\Q):C_J)$ is a divisor of~$4$, while for $n\in\{33,39,46\}$ the index is $1$ or~$2$.

We deal with the cases $n=33$ and $n=39$ by studying the group structures of the reductions, not just their orders.

Let $n=33$.
One computes the group structure of $C_J$ to be $(\Z/10 \Z)^2$.  We compute
$$
J(\F_5)\simeq \Z /10\Z \oplus \Z /20 \Z
$$
and
$$
J(\F_7)\simeq (\Z /2\Z)^2 \oplus (\Z /10 \Z)^2.
$$
Since $J(\Q)$ injects into both of these groups, it follows that $J(\Q)=C_J$.

Similarly, in the case $n=39$, one computes
$$
\begin{aligned}
C_J&\simeq \Z/2\Z \oplus \Z/28\Z,\\
J(\F_5)&\simeq \Z/4\Z \oplus \Z/28\Z,\\
J(\F_7)&\simeq (\Z/2\Z)^3 \oplus \Z/28\Z,
\end{aligned}
$$
from which it follows that $J(\Q)=C_J$.


Let $n=30$.  We compute
$$
\begin{aligned}
C_J&\simeq \Z /2 \Z \oplus\Z /4 \Z \oplus \Z /24\Z,\\
J(\F_7)&\simeq (\Z/2 \Z)^2 \oplus \Z/4 \Z \oplus \Z/ 48 \Z,\\
J(\F_{23})&\simeq \Z/2 \Z \oplus \Z/12 \Z \oplus (\Z/ 24 \Z)^2.
\end{aligned}
$$
Therefore $J(\Q)$ is isomorphic to a subgroup of $(\Z/2\Z)^2\oplus\Z/4\Z\oplus \Z /24\Z$.  Now the splitting field of $J[2]$ is $\Q(\sqrt{-3},\sqrt{5})$, and we can extract a basis for $J[2]$ from the classes of the divisors $P-Q$ where $P$ and~$Q$ are Weierstrass points of~$X$.  By computing the Galois action with respect to this basis and taking invariants, one shows that $J(\Q)[2]\simeq (\Z/2\Z)^3$.  This implies that $J(\Q)=C_J$, as claimed.

Next, let $n=46$.  We compute $C_J$ as
$$
C_J\simeq \Z/11\Z \oplus \Z/22\Z.
$$
Using reduction modulo 3 and~5, one shows that $J(\Q)$ is isomorphic to a subgroup of $(\Z/22\Z)^2$.  The splitting field of $J[2]$ is $\Q(\sqrt{-23},\alpha)$, where $\alpha^3-\alpha-1=0$.  Computing the Galois action on $J[2]$ as above, we obtain $J(\Q)[2]\simeq\Z/2\Z$, from which we conclude that $J(\Q) = C_J$.

Finally, let $n=48$. This case is harder, and we use the following method, proposed to us by Samir Siksek.  We have
$$
\begin{aligned}
C_J&\simeq \Z/4\Z \oplus \Z/8\Z\oplus \Z/8\Z,\\
J(\F_5)&\simeq \Z/2\Z \oplus \Z/4\Z \oplus (\Z/8\Z)^2.
\end{aligned}
$$
By computing the Galois action on $J[2]$ as above, we obtain $J(\Q)[2]\simeq(\Z/2\Z)^3$, and hence $J(\Q)[2]\subseteq C_J$. Suppose that $J(\Q)\ne C_J$; then there exist $P\in C_J$ and $Q\in J(\Q)\setminus C_J$ such that $2Q=P$. This implies that $P+2C_J$ is in the kernel of the map
$$C_J/2C_J\rightarrow J(\Q)/2J(\Q),$$
and hence also in the kernel of
\begin{equation}C_J/2C_J\rightarrow J(\F_5)/2J(\F_5).\label{redmap}\end{equation}
An explicit calculation shows that the map \eqref{redmap} is injective. Thus $P$ is in $2C_J$, say $P=2R$ for some $R\in C_J$. It follows that $P=2R$ and $2(Q-R)=0$ in $J(\Q)$. Since $C_J$ contains the complete $2$-torsion of $J(\Q)$, it follows that $Q\in C_J$, which is a contradiction. We conclude that $J(\Q)=C_J$.
\end{proof}

\subsection{Fields of definition of elliptic curves with $28$- or $40$-isogenies}
\label{subsec:28-40}

We now prove that all but finitely many $\Qbar$-isomorphism classes of elliptic curves over quadratic fields with $28$- or $40$-isogenies are defined over real quadratic fields. This follows from the description of all quadratic points that we have just obtained.

\begin{tm}
\label{tm:fd}
With finitely many exceptions (up to $\Qbar$-isomorphism), all elliptic curves over quadratic fields with a $28$- or $40$-isogeny are defined over real quadratic fields.
\end{tm}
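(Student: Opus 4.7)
The plan is to leverage the analysis of quadratic points from Section~\ref{subsec:finding-qp}. Apart from the finite set $\phi^{-1}(J(\Q)\setminus\{0\})$ of exceptional quadratic points, every non-cuspidal quadratic point $P$ on $X_0(n)$ has $x(P) \in \Q$. Fix a hyperelliptic Weierstrass model $y^2 = f_n(x)$ for $X_0(n)$, which exists since the genus is $2$ and can be extracted from the plane model in the Small Modular Curve database by a linear change of variables and completing the square. Any such non-cuspidal point $P$ with rational $x$-coordinate $x_0 = x(P)$ is then defined over $\Q(\sqrt{f_n(x_0)})$.

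This field is either $\Q$ or a real quadratic field precisely when $f_n(x_0) \ge 0$. Consequently, the entire theorem reduces to the single assertion that, for each $n \in \{28, 40\}$, the polynomial $f_n(x)$ is strictly positive for all $x \in \R$. I would verify this by an explicit computation on each of the two polynomials: check that $f_n$ has positive leading coefficient and no real roots, for instance via a Sturm sequence, or by factoring $f_n$ over $\R$ into quadratic factors with negative discriminant. Geometrically, positivity of $f_n$ on $\R$ amounts to $X_0(n)(\R)$ consisting of two disjoint ovals, each mapping bijectively onto $\PP^1(\R)$ through the hyperelliptic map, which is the expected topology for these two curves.

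Granting positivity, the non-exceptional quadratic points yield only real quadratic fields of definition, and the finite set of exceptional quadratic points in $\phi^{-1}(J(\Q)\setminus\{0\})$ corresponds to the finitely many $\Qbar$-isomorphism classes permitted in the statement; the cusps contribute no elliptic curves at all. The only real obstacle is the positivity check itself, which is conceptually trivial but does rely on correctly writing down $f_{28}$ and $f_{40}$; for polynomials of degree at most~$6$ the verification is routine.
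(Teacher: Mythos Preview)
Your proposal is correct and follows essentially the same route as the paper: write $X_0(n)$ in the form $y^2=f_n(x)$, note that non-exceptional quadratic points have rational $x$-coordinate, and verify that $f_n$ is positive on~$\R$ so that these points lie in real quadratic fields. One small slip: $X_0(40)$ has genus~$3$, not~$2$, but this does not affect the argument since both curves are hyperelliptic and admit the required Weierstrass models (the paper uses $f_{28}(x)=x^6+10x^4+25x^2+28$ and $f_{40}(x)=x^8+8x^6-2x^4+8x^2+1$, whose positivity is immediate).
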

\begin{proof}
The modular curves $X_0(28)$ and $X_0(40)$ admit the following equations (see for example \cite[Section~4.3]{gon-hyp} or \cite[Sections 4.3--4.4]{gal}):
\begin{align*}
X_0(28)\colon y^2 &= f_{28}(x)=x^6+10x^4+25x^2+28,\\
X_0(40)\colon y^2 &= f_{40}(x)=x^8+8x^6-2x^4+8x^2+1.
\end{align*}
Let $P$ be a non-exceptional quadratic point on $X_0(n)$, where $n=28$ or $40$.  Then $P$ is of the form $(x,\sqrt{f_n(x)})$ with $x\in \Q$. It clear that $f_n(x)>0$, so $P$ is defined over a real quadratic field.
\end{proof}

\begin{remark}
The exceptions mentioned in Theorem \ref{tm:fd} do occur: there exist elliptic curves over \emph{imaginary} quadratic fields with a $28$- or $40$-isogeny.  There are finitely many $\Qbar$-isomorphism classes of such curves, corresponding to the points in Tables \ref{tab:28} and \ref{tab:40}.
\end{remark}

\subsection{Number of elliptic curves in an isogeny class}
\label{subsec:number}

We now prove a result about isogeny classes of elliptic curves over quadratic fields. Kenku proved in \cite{ken5} that any $\Q$-isogeny class of elliptic curves contains at most $8$ curves. In \cite{s5}, the authors find isogeny classes over $\Q(\sqrt 5)$ containing 10 curves. We show that the largest $k$ such that there are infinitely many isogeny classes over quadratic fields containing $k$ curves is $k=16$, coming from points on $X_0(48)$. When counting isogeny classes, we count up to $\Qbar$-isomorphism.

\begin{tm}
\label{tm:number_isogenies}
There are infinitely many isogeny classes of elliptic curves over quadratic fields containing $16$ curves. For all $k>16$, there are only finitely many isogeny classes of elliptic curves over quadratic fields containing $k$ curves.
\end{tm}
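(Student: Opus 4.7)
The theorem has two assertions; I would address them in order.

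\textbf{Part 1: infinitely many isogeny classes of size 16.} The plan is to exhibit such a family from the non-exceptional quadratic points of $X_0(48)$, of which there are infinitely many by the preceding subsection. Each such point gives an elliptic curve $E$ over a quadratic field $K$ with a cyclic $K$-rational subgroup $C_{48}$ of order~$48$. Since $48 = 16 \cdot 3$ with $\gcd(16,3)=1$, the isogeny class of $E$ (counted up to $\Qbar$-isomorphism) decomposes as a product of the $2$-part and the $3$-part. For generic~$E$, the $3$-part has size~$2$: only the rational $3$-subgroup $C_3 \subset C_{48}$ yields a $3$-isogeny. For the $2$-part, one shows that at each intermediate curve $E/C_{2^i}$ ($i=1,2,3$) in the chain $E \to E/C_2 \to E/C_4 \to E/C_8 \to E/C_{16}$, two distinct rational $2$-subgroups are present: the ``forward'' one $C_{2^{i+1}}/C_{2^i}$, and a ``backward'' one coming from the dual of a previous isogeny. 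Since any non-identity element of $\GL_2(\F_2)$ has at most one fixed line, all three $2$-subgroups are in fact rational at each such intermediate curve, so the chain branches off to three new curves. This gives $5+3=8$ curves in the $2$-part, and hence the isogeny class has $8 \cdot 2 = 16$ curves in total.

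\textbf{Part 2: only finitely many isogeny classes of size $>16$.} The plan is to bound isogeny class sizes over all ``infinite families'' of elliptic curves over quadratic fields. By the Mazur--Kenku--Momose classification, for a cyclic $n$-isogeny over a quadratic field, infinitely many $\Qbar$-isomorphism classes arise only when $n$ lies in an explicit finite list: the genus-$0$ cases of $X_0(n)$, the positive-rank elliptic cases, and the hyperelliptic list of this paper. For each such~$n$ I would repeat the enumeration of Part~$1$ to compute the generic isogeny class size. A case-by-case verification should show that $n=48$ gives the unique maximum of $16$; the other values give strictly smaller sizes (primes contribute $2$, squarefree composites $2^{\omega(n)}$, and the remaining prime-power composite cases $n\in\{28,40,50\}$ can be handled by the same Borel/branching analysis as in Part~$1$). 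Hence isogeny classes of size $>16$ arise only from finitely many $j$-invariants.

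\textbf{Main obstacle.} The main difficulty is Part~$2$'s case-by-case enumeration: for each relevant~$n$, one must carefully identify the Galois-stable finite subgroups of~$E$ (including non-cyclic ones that arise when dual isogenies force additional rational lines), count the $\Qbar$-isomorphism classes of quotient curves, and account for ``coincidental'' isomorphisms such as $E/E[d] \simeq E$. Each case is nonetheless a concrete finite computation, and the total number of cases is small.
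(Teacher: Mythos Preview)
Your Part~1 is essentially the paper's argument: the non-CM quadratic points on $X_0(48)$ give elliptic curves with a cyclic $48$-subgroup, the three intermediate curves in the $2$-power chain acquire full rational $2$-torsion (because a subgroup of $\GL_2(\F_2)$ fixing two lines fixes all three), each branches off once more, and tensoring with the $3$-isogeny doubles the count to~$16$.

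Part~2, however, has a real gap. Your list of $n$ for which $X_0(n)$ has infinitely many quadratic points is incomplete. Besides the genus~$0$, genus~$1$, and hyperelliptic cases, a curve of genus $\ge2$ can have infinitely many quadratic points if it is \emph{bielliptic} with an elliptic quotient of positive rank over~$\Q$ (this is the Harris--Silverman/Abramovich--Harris dichotomy). Several $X_0(n)$ that are not hyperelliptic are bielliptic in this sense, and these must be included in any case-by-case enumeration. (Even within the hyperelliptic range, your phrase ``the hyperelliptic list of this paper'' excludes $X_0(37)$, which is hyperelliptic and hence has infinitely many quadratic points despite its Jacobian having positive rank.) The attribution ``Mazur--Kenku--Momose'' is also off: those results concern torsion, not the finiteness of degree-$2$ points on $X_0(n)$.

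The paper does not carry out your proposed enumeration at all; it simply invokes Bars~\cite[Theorem~4.3]{bar}, which supplies exactly the complete finite list of $n$ with infinitely many quadratic points on $X_0(n)$ (combining the hyperelliptic and bielliptic classifications) and from which the bound $16$ follows. Your strategy is salvageable once you replace your incomplete list by Bars' list, but then the case analysis is longer than you indicate, since several bielliptic levels (e.g.\ $n\in\{53,61,65,79,83,89,101,131,\ldots\}$) must be checked as well.
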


\begin{proof}
Since $X_0(48)$ is hyperelliptic, it has infinitely many quadratic points.  Since there are only finitely many $\Qbar$-isomorphism classes of elliptic curves over quadratic fields with complex multiplication, there are in fact infinitely many quadratic points on $X_0(48)$ corresponding to elliptic curves without complex multiplication.  Let $E_1$ be an elliptic curve over a quadratic field~$K$ corresponding to such a point.

Let $f_{1,5}\colon E_1\to E_5$ be a $16$-isogeny (over~$K$) to another curve $E_5$. Then $f_{1,5}$ factors as $f_{4,5}\circ f_{3,4}\circ f_{2,3} \circ f_{1,2}$, where $f_{i,i+1}$ is a $2$-isogeny from $E_i$ to $E_{i+1}$ and $E_2$, $E_3$ and $E_4$ are elliptic curves over $K$.  We note that $E_2$, $E_3$ and $E_4$ all have two distinct $2$-isogenies over~$K$.
It follows that $E_2(K)$, $E_3(K)$ and $E_4(K)$ all have full $2$-torsion.
This implies that each of these curves has a third $2$-isogeny, say to $E_6$, $E_7$ and $E_8$, respectively. We obtain the following isogeny diagram:
\begin{center}
\begin{tikzpicture}[%
  back line/.style={densely dotted},
  cross line/.style={preaction={draw=white, -,line width=6pt}},
  node distance=2.5cm]
  \node (A) {$E_1$};
  \node [right of=A, node distance =2.1cm] (B) {$E_2$};
  \node [right of=B, node distance =2.1cm] (C) {$E_3$};
  \node [right of=C, node distance =2.1cm] (D) {$E_{4}$};
  \node (E) [right of=D, node distance=2.1cm] {$E_{5}$};
  \node [below of=B, node distance =1.1cm] (B1) {$E_{6}$};
  \node [below of=C, node distance =1.1cm] (C1) {$E_7$};
  \node [below of=D, node distance =1.1cm] (D1) {$E_8$};
  \draw[dashed] ;
  \draw[dashed] ;
  \draw[cross line] (A)--(B)--(C)--(D)--(E);
  \draw[cross line] (B)--(B1);
  \draw[cross line] (C)--(C1);
  \draw[cross line] (D)--(D1);
  \draw[dashed] ;
\end{tikzpicture}
\end{center}
Each of the $E_i$ is $3$-isogenous (over $K$) to an elliptic curve $E_i'$, and hence we get 16 elliptic curves in this isogeny class.

The second claim follows from results of Bars \cite[Theorem 4.3]{bar}.
\end{proof}

\begin{remark}
The elliptic curve $E_1$ is $12$-isogenous to a twist of $\lsup\sigma{E_1}$, so $\lsup\sigma{E_1}$ is a twist of either $E_3'$ or $E_6'$.
It follows from the moduli interpretation of $\beta_{48}$ (see \S\ref{subsec:moduli-interpretation}) that the kernel of this isogeny is not in the kernel of a $48$-isogeny, ruling out $E_3'$.
We conclude that $\lsup\sigma{E_1}$ is a twist of $E_6'$.
\end{remark}

\section{Moduli interpretation of the normalizer of $\Gamma_0(n)$}

\label{sec:interpretation_hi}

Let $n$ be a positive integer, and let $B(\Gamma_0(n))$ denote the normalizer of $\Gamma_0(n)$ in the group $\GL_2(\Q)^+$ of $2\times 2$-matrices over~$\Q$ with positive determinant.  In this section, we describe a canonical action of $B(\Gamma_0(n))$ on~$X_0(n)$, and we give a moduli interpretation of this action.  From this we then derive a moduli interpretation of the hyperelliptic involution of $X_0(n)$.  This lies at the basis of the constructions and results of the next two sections.

\subsection{The action of $B(\Gamma_0(n))$}

\label{subsec:aut-modular-curves}

Let $\cC_n$ be the category defined as follows.  The objects of~$\cC_n$ are triples $(E\to S,C,\omega)$ consisting of an elliptic curve $E$ over a $\Q$-scheme $S$, a cyclic subgroup scheme $C$ of order~$n$ of~$E$ and a nowhere-vanishing global relative differential $\omega$ on~$E$.  A morphism $(E'\to S',C',\omega') \longrightarrow (E\to S,C,\omega)$ in~$\cC_n$ is a Cartesian diagram of schemes
$$
\begin{tikzcd}
E' \arrow{r}{\phi} \dar & E \dar \\
S' \rar & S
\end{tikzcd}
$$
that is compatible with the group structure and satisfies $\phi^*C=C'$ and $\phi^*\omega=\omega'$.  We will usually omit $S$ from the notation.

We let $B(\Gamma_0(n))$ act by equivalences of categories on $\cC_n$ as follows.  Let $\gamma\in B(\Gamma_0(n))$.  First we consider the case where $\gamma$ is a scalar matrix $\smallmat a00a$ with $a\in\Q^\times$.  For such $\gamma$, we define
$$
\gamma(E,C,\omega)=(E,C,a^{-1}\omega).
$$
Next we may assume that $\gamma$ is of the form $\smallmat abcd$ with $a,b,c,d\in\Z$.  We put $\delta=\det\gamma=ad-bc$; this is a positive integer.  After \'etale localization on~$S$, we can choose an isomorphism
$$
\phi\colon(\Z/n\delta\Z)^2_S\isom E[n\delta].
$$
We may assume that the subgroup $\langle\phi(0,\delta)\rangle$ generated by~$\phi(0,\delta)$ equals $C$.  We put
$$
C_\gamma=n\langle\phi(a,b),\phi(c,d)\rangle\subset E.
$$
This is a subgroup of order~$\delta$ whose structure (elementary divisors) is given by the Smith normal form of~$\gamma$.  Furthermore, we put
$$
E'=E/C_\gamma
$$
and
$$
C'=(\langle\phi(c,d)\rangle+C_\gamma)/C_\gamma\subset E';
$$
one easily sees that $C'$ is cyclic of order~$n$.  There exists a unique differential $\omega'$ on~$E/C_\gamma$ whose pull-back to~$E$ equals $\omega$.  We define
$$
\gamma(E, C, \omega)=(E', C', \omega').
$$

Because $X_0(n)$ is the coarse moduli space of pairs $(E, C)$ as above and the above construction commutes with scaling $\omega$, any $\gamma\in B(\Gamma_0(n))$ induces an automorphism
$$
\iota_\gamma\colon X_0(n)\isom X_0(n).
$$
This automorphism only depends on the image of~$\gamma$ in $B(\Gamma_0(n))/\Gamma_0(n)$.  Thus we get an action of the latter group by automorphisms of the curve $X_0(n)$ over~$\Q$.


\subsection{The complex perspective}

We now study an analogous construction over the complex numbers, where we can express elliptic curves using lattices.

Let $\cL_\Q$ denote the set of all group homomorphisms $\psi\colon\Q^2\to\C$ such that the group
$$
L_\psi=\psi(\Z^2)=\Z\psi(1,0)+\Z\psi(0,1)
$$
is a ``positively-oriented'' lattice in~$\C$, in the sense that $\psi(1,0)$ and $\psi(0,1)$ are $\R$-linearly independent and $\psi(1,0)/\psi(0,1)$ has positive imaginary part.

We define a free left action of $\GL_2(\Q)^+$ on $\cL_\Q$ as follows: given $\gamma\in\GL_2(\Q)^+$ and $\psi\colon\Q^2\to\C$ in~$\cL_\Q$, we define $\gamma\psi\in\cL_\Q$ by
$$
(\gamma\psi)(v) = \frac{1}{\det\gamma}\psi(v\gamma),
$$
where $v\gamma$ denotes the usual right action of~$\GL_2(\Q)^+$ on~$\Q^2$ (``row vectors'').  More concretely, the bases $(\omega_1,\omega_2)$ of~$L_\psi$ and $(\omega_1',\omega_2')$ of~$L_{\gamma\psi}$ defined by
$$
\begin{aligned}
\omega_1=\psi(1,0),&\quad \omega_2=\psi(0,1),\\
\omega_1'=(\gamma\psi)(1,0),&\quad \omega_2'=(\gamma\psi)(0,1),
\end{aligned}
$$
satisfy the relation
\begin{equation}
\omega_1'=\frac{1}{\det\gamma}(a\omega_1+b\omega_2),\quad
\omega_2'=\frac{1}{\det\gamma}(c\omega_1+d\omega_2).
\label{eq:gamma-on-basis}
\end{equation}

\begin{lemma}
\label{lemma:actionL}
\begin{enumerate}
\item Let $\psi$ and~$\psi'$ be in $\cL_\Q$.  Then $L_\psi=L_{\psi'}$ if and only if the orbits $\SL_2(\Z)\psi$ and $\SL_2(\Z)\psi'$ are equal.
\item Let $\psi$ be in~$\cL_\Q$, and let $\gamma$ and~$\gamma'$ be in $\GL_2(\Q)^+$.  Then $L_{\gamma\psi}=L_{\gamma'\psi}$ if and only if the cosets $\SL_2(\Z)\gamma$ and $\SL_2(\Z)\gamma'$ are equal.
\end{enumerate}
\end{lemma}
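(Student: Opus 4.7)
My plan is to prove part~(1) directly from the definition of the action and then deduce part~(2) from part~(1) together with the freeness of the $\GL_2(\Q)^+$-action on $\cL_\Q$ (asserted just before the lemma).

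For part~(1), set $\omega_1=\psi(1,0)$, $\omega_2=\psi(0,1)$ and $\omega_1'=\psi'(1,0)$, $\omega_2'=\psi'(0,1)$. The key observation is that $(\omega_1,\omega_2)$ and $(\omega_1',\omega_2')$ are two positively-oriented $\Z$-bases of the lattices $L_\psi$ and $L_{\psi'}$ respectively. So if $L_\psi=L_{\psi'}$, there is a unique $\gamma=\smallmat abcd\in\GL_2(\Z)$ expressing one basis in terms of the other, and the compatibility of the two positive orientations forces $\det\gamma=1$, i.e., $\gamma\in\SL_2(\Z)$. Substituting $\det\gamma=1$ into formula~\eqref{eq:gamma-on-basis} shows that the relation between the bases is exactly $\psi'=\gamma\psi$. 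Conversely, for any $\sigma\in\SL_2(\Z)$, formula~\eqref{eq:gamma-on-basis} writes the basis $(\omega_1',\omega_2')$ of $L_{\sigma\psi}$ as an integer change of basis (with unit determinant) of $(\omega_1,\omega_2)$, so $L_{\sigma\psi}=L_\psi$.

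For part~(2), apply part~(1) to $\psi_1=\gamma\psi$ and $\psi_2=\gamma'\psi$: we have $L_{\gamma\psi}=L_{\gamma'\psi}$ if and only if $\gamma'\psi=\sigma(\gamma\psi)$ for some $\sigma\in\SL_2(\Z)$. A direct computation (already implicit in the definition of the action) shows that the prescription $\gamma\mapsto\gamma\psi$ is a left action, i.e., $\sigma(\gamma\psi)=(\sigma\gamma)\psi$. Thus the equality becomes $\gamma'\psi=(\sigma\gamma)\psi$, and freeness of the $\GL_2(\Q)^+$-action on $\cL_\Q$ gives $\gamma'=\sigma\gamma$, which is exactly the equality of cosets $\SL_2(\Z)\gamma=\SL_2(\Z)\gamma'$. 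The converse direction is immediate from part~(1) applied to $\gamma\psi$ and $\sigma(\gamma\psi)=(\sigma\gamma)\psi=\gamma'\psi$.

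The only subtlety is the orientation check in part~(1): one must confirm that when both $\psi,\psi'$ lie in $\cL_\Q$, the change-of-basis matrix is automatically in $\SL_2(\Z)$ rather than merely $\GL_2(\Z)$; this follows because reversing orientation corresponds to $\det=-1$, which is excluded. If freeness of the $\GL_2(\Q)^+$-action is not taken for granted, one recovers it by the same $\R$-linear-independence argument: an equation $g\psi=\psi$ with $g=\smallmat abcd$ yields $a\omega_1+b\omega_2=(\det g)\omega_1$ and $c\omega_1+d\omega_2=(\det g)\omega_2$, and since $\omega_1,\omega_2$ are $\R$-linearly independent this forces $b=c=0$ and $a=d=\det g=a^2$, hence $a=1$. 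I do not expect a serious obstacle; the whole lemma is essentially a bookkeeping translation between ``same lattice'' and ``same $\SL_2(\Z)$-orbit''.
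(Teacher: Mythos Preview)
Your proposal is correct and follows essentially the same approach as the paper: the paper simply says the first claim is ``easy to verify'' and that the second follows from the first together with the freeness of the $\GL_2(\Q)^+$-action, which is exactly what you do, with the details spelled out.
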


\begin{proof}
The first claim is easy to verify; the second one follows from the first and the fact that $\GL_2(\Q)^+$ acts freely on~$\cL_\Q$.
\end{proof}

Now let $n$ be a positive integer.  We write $\cL_n$ for the set of pairs of lattices $(L,L')$ in~$\C$ such that $L'\supseteq L$ and $L'/L$ is cyclic of order~$n$.  Let $\gamma_n$ be the matrix $\bigl(\begin{smallmatrix} n & 0 \\ 0 & 1 \end{smallmatrix}\bigr)$.  We define a map
$$
\begin{aligned}
\pi_n\colon\cL_\Q&\longrightarrow\cL_n\\
\psi&\longmapsto(L_\psi,L_{\gamma_n\psi}).
\end{aligned}
$$
We note that $L_{\gamma_n\psi}$ is the lattice spanned by $\psi(1,0)$ and $\frac{1}{n}\psi(0,1)$.

\begin{lemma}
The map $\pi_n$ is a quotient map for the left action of the subgroup $\Gamma_0(n)\subset\GL_2(\Q)^+$ on~$\cL_\Q$.
\end{lemma}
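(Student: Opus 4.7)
The plan is to establish three facts: (a) $\pi_n$ is constant on $\Gamma_0(n)$-orbits, (b) conversely, every fibre of $\pi_n$ is contained in a single $\Gamma_0(n)$-orbit, and (c) $\pi_n$ is surjective. All three reduce to Lemma~\ref{lemma:actionL} together with one elementary matrix computation: conjugation by $\gamma_n = \smallmat{n}{0}{0}{1}$ sends $\gamma = \smallmat{a}{b}{c}{d} \in \GL_2(\Q)^+$ to $\smallmat{a}{nb}{c/n}{d}$. In particular, for $\gamma \in \SL_2(\Z)$ we have $\gamma_n \gamma \gamma_n^{-1} \in \SL_2(\Z)$ if and only if $n \mid c$, which is to say $\gamma \in \Gamma_0(n)$.

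For (a), take $\psi \in \cL_\Q$ and $\gamma \in \Gamma_0(n) \subset \SL_2(\Z)$; Lemma~\ref{lemma:actionL}(1) gives $L_{\gamma\psi} = L_\psi$, while writing $\gamma_n \gamma = (\gamma_n \gamma \gamma_n^{-1})\gamma_n$ with $\gamma_n \gamma \gamma_n^{-1} \in \SL_2(\Z)$ and invoking Lemma~\ref{lemma:actionL}(2) yields $L_{\gamma_n \gamma \psi} = L_{\gamma_n \psi}$, hence $\pi_n(\gamma\psi) = \pi_n(\psi)$. For (b), suppose $\pi_n(\psi) = \pi_n(\psi')$. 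The equality $L_\psi = L_{\psi'}$ combined with Lemma~\ref{lemma:actionL}(1) produces $\gamma \in \SL_2(\Z)$ with $\psi' = \gamma\psi$. The remaining equality $L_{\gamma_n\psi} = L_{\gamma_n\psi'} = L_{(\gamma_n\gamma)\psi}$, fed into Lemma~\ref{lemma:actionL}(2), forces $\SL_2(\Z)\gamma_n = \SL_2(\Z)\gamma_n\gamma$, i.e.\ $\gamma_n \gamma \gamma_n^{-1} \in \SL_2(\Z)$, and by the key computation this is exactly the condition $\gamma \in \Gamma_0(n)$.

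For (c), given $(L, L') \in \cL_n$, the Smith normal form applied to the inclusion $L \subset L'$, combined with the hypothesis that $L'/L$ is cyclic of order $n$, supplies a positively oriented basis $(e_1, e_2)$ of $L'$ such that $(e_1, n e_2)$ is a basis of $L$. Setting $\psi(1,0) = e_1$ and $\psi(0,1) = n e_2$ yields $\psi \in \cL_\Q$ with $L_\psi = L$ and, by (\ref{eq:gamma-on-basis}), $L_{\gamma_n\psi} = \Z e_1 + \Z e_2 = L'$, so $\pi_n(\psi) = (L, L')$. I do not expect a real obstacle here; the only thing to watch is keeping the row-vector convention from (\ref{eq:gamma-on-basis}) straight during the conjugation computation, since everything substantive is already packaged in Lemma~\ref{lemma:actionL}.
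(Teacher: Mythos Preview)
Your proof is correct and follows essentially the same approach as the paper: both reduce the fibre computation to the equivalence $\gamma\in\Gamma_0(n)\iff\gamma\in\SL_2(\Z)$ and $\gamma_n\gamma\gamma_n^{-1}\in\SL_2(\Z)$ via Lemma~\ref{lemma:actionL}. The only difference is organizational---the paper packages (a) and (b) into a single if-and-only-if and dismisses surjectivity as ``straightforward,'' whereas you spell out the Smith normal form argument for~(c).
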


\begin{proof}
It is straightforward to check that $\pi_n$ is surjective.  We have
$$
\pi_n(\gamma\psi)=(L_{\gamma\psi},L_{\gamma_n\gamma\psi}).
$$
Hence $\pi_n(\gamma\psi)=\pi_n(\psi)$ if and only if $L_{\gamma\psi}=L_\psi$ and $L_{\gamma_n\gamma\psi}=L_{\gamma_n\psi}$.  By Lemma~\ref{lemma:actionL}, this condition is equivalent to $\gamma\in\SL_2(\Z)$ and $\gamma_n\gamma\gamma_n^{-1}\in\SL_2(\Z)$, which is in turn equivalent to $\gamma\in\Gamma_0(n)$.
\end{proof}

The above result yields a natural action of $B(\Gamma_0(n))$ on~$\cL_n$.  Viewing elements of~$\cL_n$ as values of $\pi_n$, we can describe this action as
$$
\gamma(L_\psi,L_{\gamma_n\psi})=(L_{\gamma\psi},L_{\gamma_n\gamma\psi})
\quad\text{for all }\gamma\in B(\Gamma_0(n))
\text{ and }\psi\in\cL_\Q.
$$
Furthermore, there is a map from $\cL_\Q$ to the upper half-plane~$\HH$ sending $\psi$ to $\psi(1,0)/\psi(0,1)$.  This descends to a map from $\cL_n$ to the non-compact analytic modular curve $\Gamma_0(n)\backslash\HH$.  From these observations and the relation~\eqref{eq:gamma-on-basis}, we obtain a commutative diagram
$$
\begin{tikzcd}
\cL_\Q \rar \arrow{d}{\pi_n} & \HH \dar \\
\cL_n \rar & \Gamma_0(n)\backslash\HH
\end{tikzcd}
$$
in which the upper horizontal map is compatible with the action of~$\GL_2(\Q)^+$ and the lower horizontal map is compatible with the action of~$B(\Gamma_0(n))$.

\subsection{Compatibility}

We now show that the constructions in the two preceding sections are compatible.  This allows us to reinterpret the action of $B(\Gamma_0(n))$ on the analytic modular curve $\Gamma_0(n)\backslash\HH$ as an action on objects of $\cC_n$.

Let $\cC_n^\an$ be the set of isomorphism classes of (analytified) objects of~$\cC_n$ over the base $S=\Spec\C$.  The action of~$B(\Gamma_0(n))$ on~$\cC_n$ induces an action on~$\cC_n^\an$.  We define a map
$$
\begin{aligned}
F_n\colon\cL_n&\longrightarrow\cC_n^\an\\
(L,L')&\longmapsto(\C/L,L'/L,2\pi i\,\dd z).
\end{aligned}
$$
The following result shows that $F_n$ respects the actions of $B(\Gamma_0(n))$ that we have defined on $\cL_n$ and on~$\cC_n^\an$, respectively.

\begin{proposition}
For all $\gamma\in B(\Gamma_0(n))$ and all $(L,L')\in\cL_n$, we have
$$
F_n(\gamma(L,L'))=\gamma(F_n(L,L')).
$$
\end{proposition}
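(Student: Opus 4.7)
The plan is to reduce the proposition to the two cases used to define the action and verify each by a direct lattice computation. The action of $B(\Gamma_0(n))$ on both $\cL_n$ and $\cC_n$ was defined case-by-case on scalar matrices and on integer-entry matrices of positive determinant; any $\gamma\in B(\Gamma_0(n))\subset\GL_2(\Q)^+$ factors as a product of these two types, so it suffices to treat them in turn.

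For a scalar $\gamma=\smallmat{a}{0}{0}{a}$ with $a\in\Q^\times$, the formula $(\gamma\psi)(v)=a^{-2}\psi(av)=a^{-1}\psi(v)$ yields $L_{\gamma\psi}=\tfrac{1}{a}L_\psi$ and $L_{\gamma_n\gamma\psi}=\tfrac{1}{a}L_{\gamma_n\psi}$, so $F_n(\gamma(L,L'))=(\C/\tfrac{1}{a}L,\tfrac{1}{a}L'/\tfrac{1}{a}L,2\pi i\,\dd z)$. Multiplication by $1/a$ furnishes an isomorphism $\C/L\isom\C/\tfrac{1}{a}L$ of elliptic curves with level structure that pulls $2\pi i\,\dd z$ back to $a^{-1}\cdot 2\pi i\,\dd z$, identifying the above triple with $\gamma(F_n(L,L'))=(\C/L,L'/L,a^{-1}\cdot 2\pi i\,\dd z)$ as objects of $\cC_n^\an$.

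For an integer matrix $\gamma=\smallmat{a}{b}{c}{d}$ of determinant $\delta>0$, pick $\psi\in\cL_\Q$ with $\pi_n(\psi)=(L,L')$ and set $\omega_1=\psi(1,0)$, $\omega_2=\psi(0,1)$. Use the level structure $\phi\colon(\Z/n\delta\Z)^2\to E[n\delta]$ defined by $\phi(i,j)=(i\omega_1+j\omega_2)/(n\delta)\bmod L$, under which $\phi(0,\delta)=\omega_2/n$ generates $L'/L=C$. From Cramer's rule one has $\delta\omega_k\in\Z(a\omega_1+b\omega_2)+\Z(c\omega_1+d\omega_2)$, so the preimage in $\C$ of $C_\gamma=n\langle\phi(a,b),\phi(c,d)\rangle$ is exactly $\tfrac{1}{\delta}(\Z(a\omega_1+b\omega_2)+\Z(c\omega_1+d\omega_2))$, which by formula~\eqref{eq:gamma-on-basis} equals $L_{\gamma\psi}$; hence $E/C_\gamma\simeq\C/L_{\gamma\psi}$. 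An analogous computation identifies the preimage of $\langle\phi(c,d)\rangle+C_\gamma$ with $\tfrac{1}{\delta}\Z(a\omega_1+b\omega_2)+\tfrac{1}{n\delta}\Z(c\omega_1+d\omega_2)$, and from $\gamma_n\gamma=\smallmat{na}{nb}{c}{d}$ one reads off that this equals $L_{\gamma_n\gamma\psi}$, so $C'=L_{\gamma_n\gamma\psi}/L_{\gamma\psi}$. Since the quotient $\C/L\to\C/L_{\gamma\psi}$ is induced by the identity on $\C$, the unique differential on $\C/L_{\gamma\psi}$ pulling back to $2\pi i\,\dd z$ is $2\pi i\,\dd z$ itself, so both sides of the proposition are the triple $(\C/L_{\gamma\psi},L_{\gamma_n\gamma\psi}/L_{\gamma\psi},2\pi i\,\dd z)$.

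The main obstacle is the bookkeeping in the integer-matrix case: verifying that $\phi(0,\delta)$ really generates $C$, that $C_\gamma$ has order $\delta$ with the Smith-normal-form elementary divisors, and that the Cramer/Bézout identity above correctly absorbs $L$ into the two displayed lattice sums. Once these ingredients are in place, the two constructions produce the same object of $\cC_n^\an$ on the nose.
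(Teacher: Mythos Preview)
Your proof is correct and follows the same approach as the paper: split into the scalar and integer-matrix cases, handle the scalar case by the isomorphism $z\mapsto z/a$ (the paper uses the inverse $z\mapsto az$), and in the integer case identify both sides with $(\C/L_{\gamma\psi},L_{\gamma_n\gamma\psi}/L_{\gamma\psi},2\pi i\,\dd z)$ via the level structure $\phi(i,j)=(i\omega_1+j\omega_2)/(n\delta)$. In fact you spell out the lattice identifications (using Cramer's rule and the explicit form of $\gamma_n\gamma$) that the paper dismisses as ``a straightforward computation,'' so your version is strictly more detailed.
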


\begin{proof}
Let $(L,L')\in\cL$, and let $\psi\in\cL_\Q$ be such that $\pi_n(\psi)=(L,L')$, so that $L=L_\psi$ and $L'=L_{\gamma_n\psi}$.  In~$\cC_n^\an$, we then have
$$
F_n(L,L')=(\C/L_\psi, L_{\gamma_n\psi}/L_\psi, 2\pi i\,\dd z).
$$

We first assume that $\gamma$ is a scalar matrix $\smallmat a00a$ with $a\in\Q^\times$.  Then \eqref{eq:gamma-on-basis} implies
$$
L_{\gamma\psi}=a^{-1}L_\psi=a^{-1}L
\quad\text{and}\quad
L_{\gamma_n\gamma\psi}=a^{-1}L_{\gamma_n\psi}=a^{-1}L'.
$$
This implies
$$
\begin{aligned}
F_n(\gamma(L,L'))&=F_n(a^{-1}L,a^{-1}L')\\
&=(\C/a^{-1}L,a^{-1}L/a^{-1}L',2\pi i\,\dd z).
\end{aligned}
$$
On the other hand, we have
$$
\gamma(F_n(L,L'))=(\C/L,L/L',a^{-1}\cdot2\pi i\,\dd z).
$$
These two objects are isomorphic via the map $z\mapsto az$.

Next we assume that $\gamma$ is of the form $\smallmat abcd$ with $a,b,c,d\in\Z$ and $\delta=ad-bc>0$.  Then the lattices $L=L_\psi$ and~$L_{\gamma\psi}$ satisfy $L_{\gamma\psi}\supseteq L_\psi$ and $L_{\gamma\psi}/L_\psi$ has order~$\delta$.  We define a group isomorphism
$$
\begin{aligned}
\phi\colon(\Z/n\delta\Z)^2&\isom(\C/L)[n\delta]
={1\over n\delta}L/L\\
(x\bmod n\delta,y\bmod n\delta) &\longmapsto \psi(x/n\delta,y/n\delta)\bmod L.
\end{aligned}
$$
A straightforward computation shows that both $\gamma(F_n(L,L'))$ and $F_n(\gamma(L,L'))$ are equal to $(\C/L_{\gamma\psi}, L_{\gamma_n\gamma\psi}/L_{\gamma\psi}, 2\pi i\,\dd z)$ in~$\cC_n^\an$.  This concludes the proof.
\end{proof}

\subsection{Moduli interpretation of the hyperelliptic involution}

\label{subsec:moduli-interpretation}

We now describe an element $\gamma\in B(\Gamma_0(n))$ such that the hyperelliptic involution~$\iota$ of~$X_0(n)$ is the automorphism induced by~$\gamma$ as in \S\ref{subsec:aut-modular-curves}, and we give a moduli interpretation of~$\gamma$.

Suppose first that $\iota$ equals the Atkin--Lehner involution $w_d$ for some $d\mid n$ with $\gcd(d,n/d)=1$.  This is the case for all $n$ we consider except $n=40$ and $n=48$.  For the matrix $\gamma$ we can take any
$\smallmat{ad}{b}{n}{d}$ with $a,b\in\Z$ and $ad-b(n/d)=1$.

In \cite[Section 4]{ogg} it is proved that for $n=40$, the involution~$\iota$ is induced by the action of the matrix $\beta_{40}=\smallmat{-10}{1}{-120}{10}\in B(\Gamma_0(40))$ of determinant~$20$, and that for $n=48$ it is induced by the matrix $\beta_{48}=\smallmat{-6}{1}{-48}{6}\in B(\Gamma_0(48))$ of determinant~$12$.

By the description of the action of $B(\Gamma_0(n))$ in \S\ref{subsec:aut-modular-curves}, the element $\gamma$ equips every elliptic curve with a cyclic subgroup $C$ of order~$n$ with a second cyclic subgroup $C_\gamma$ of order~$\delta$, where $\delta=\det\gamma$, and hence with a $\delta$-isogeny.  If $\iota=w_d$, then we have $C_\gamma=(n/d)C$; if $n=40$, then $C_\gamma$ is a subgroup of order~$20$ such that $C\cap C_\gamma$ has order~$10$; if $n=48$, then $C_\gamma$ is a subgroup of order~$12$ such that $C\cap C_\gamma$ has order~$6$.

\section{Modular forms}
\label{sec:modular_forms}

Let $n$ be a positive integer, and let $\gamma\in B(\Gamma_0(n))$.  In this section, we construct a modular form $A_\gamma$ of weight~$2$ for~$\Gamma_0(n)$, based on the moduli interpretation of~$\gamma$ described in \S\ref{subsec:aut-modular-curves}.  The reason for introducing $A_\gamma$ is that it allows us to explicitly compute the isogenies determined by the hyperelliptic involution of $X_0(n)$.  This generalizes the methods of Elkies~\cite{elk} to our setting.

More precisely, let $(E\to S, C, \omega)$ be an object of $\cC_n$, and let $C_\gamma$ be the cyclic subgroup of~$E$ determined by the moduli interpretation of~$\gamma$ as in \S\ref{subsec:aut-modular-curves}.  Let $E'$ be the quotient $E/C_\gamma$, equipped with the differential induced from~$E$.  In the situation of Section~\ref{sec:moduli-interpretation} below, we know explicit expressions for $E$, $E'$, $\delta$ and~$A_\gamma(E,\omega)$.  The subgroup~$C_\gamma$ can be computed from these data using an algorithm due to Elkies \cite[Section~3]{elk}.  We can then use V\'elu's formulae \cite{velu} to compute rational functions defining an isogeny $E\to E'$ with kernel~$C_\gamma$.

\subsection{Preliminaries}

\label{subsec:modform-prelim}

We begin by recalling the facts about modular forms that we need and introducing our notation.

A modular form of weight~$k$ for~$\Gamma_0(n)$ can be viewed (\`a la Katz) as a rule that to every object $(E\to S,C,\omega)$ of $\cC_n$, where we may restrict to the case where $S$ is an affine scheme $\Spec A$, assigns an element $F(E\to S,C,\omega)\in A$ that satisfies $F(E\to S,C,\lambda\omega)=\lambda^{-k} F(E\to S,C,\omega)$ for all $\lambda\in A^\times$, is compatible with morphisms in $\cC_n$ as in \S\ref{subsec:aut-modular-curves}, and is regular at the cusps in a suitable sense; see Katz \cite[Chapter~1]{katz} for details.

If $E$ is the Tate curve over $\C((q))$ with parameter~$\zeta$, equipped with the canonical subgroup $C=\mu_n$ and the canonical differential $\omega=\frac{\dd\zeta}{\zeta}$, then the $q$-expansion of~$F$ is $F(q) = F(E,C,\omega)\in\C((q))$.  The space of modular forms of weight~$2$ for~$\Gamma_0(n)$ is isomorphic to the space of logarithmic differentials on~$X_0(n)$; see for example Katz \cite[Appendix~1]{katz}.  If $F_\alpha$ is the modular form of weight~2 attached to a logarithmic differential~$\alpha$, the relation between $\alpha$ and~$F_\alpha$ can be expressed in terms of the $q$-expansion of~$F_\alpha$ as $\alpha=F_\alpha(q)\frac{\dd q}{q}$.

Over $\C$, modular forms $F$ as above correspond to functions~$f$ on the upper half-plane~$\HH$: if $(E,C,\omega) = (\C/(\Z\tau+\Z), \langle 1/n\rangle,2\pi i\,\dd z)$, then $F(E,C,\omega)=f(\tau)$.  The logarithmic differential corresponding to a modular form~$f$ is $f(\tau)\frac{\dd q}{q}$, where $q=\exp(2\pi i\tau)$.

The group $B(\Gamma_0(n))$ acts from the right on the space of modular forms of weight~$k$ for~$\Gamma_0(n)$ via
$$
(F|_k\gamma)(E,C,\omega)=F(\gamma(E,C,\omega)).
$$
If $\alpha$ is a logarithmic differential on~$X_0(n)$ and $\iota$ is the automorphism of~$X_0(n)$ induced by~$\gamma$, we have
\begin{equation}
F_\alpha|_2\gamma=(\det\gamma)F_{\iota^*\alpha}.
\label{eq:ks-compat}
\end{equation}

As usual, we write $\sigma_k(m)$ for the sum of the $k$-th powers of the positive divisors of~$m$.  Let $\EisE_4$ and $\EisE_6$ denote the standard Eisenstein series, normalized so that their $q$-expansions are
$$
\begin{aligned}
\EisE_4(q)&=\frac{1}{240}+\sum_{m=1}^\infty \sigma_3(m)q^m,\\
\EisE_6(q)&=-\frac{1}{504}+\sum_{m=1}^\infty \sigma_5(m)q^m.
\end{aligned}
$$
If $E$ is elliptic curve given by a Weierstrass equation in $x$ and~$y$, equipped with the standard differential $\omega=\frac{\dd x}{2y}$, these Eisenstein series are related to the usual coefficients $c_4$ and~$c_6$ by
$$
c_4=240\EisE_4(E,\omega)
\quad\text{and}\quad
c_6=504\EisE_6(E,\omega).
$$
We view $\EisE_4$ and~$\EisE_6$ as modular forms for~$\Gamma_0(n)$.  Furthermore, for every divisor $d$ of~$n$, let $\EisE_2^{(d)}$ be the modular form of weight~2 for $\Gamma_0(n)$ given by the $q$-expansion
$$
\EisE_2^{(d)}(q)=\EisE_2(q)-d\EisE_2(q^d),
$$
where
$$
\begin{aligned}
\EisE_2(q) &= -\frac{1}{24} + \sum_{m=1}^\infty \sigma_1(m)q^m \\
&= -\frac{1}{24} + \sum_{n=1}^\infty \frac{q^n}{(1-q^n)^2}.
\end{aligned}
$$

\subsection{The modular form $A_\gamma$}

Let $n$ be a positive integer, and let $\gamma\in B(\Gamma_0(n))$.  Let $(E\to S, C, \omega)$ be an object of $\cC_n$, and let $C_\gamma$ be the cyclic subgroup of~$E$ determined by the moduli interpretation of~$\gamma$ as in \S\ref{subsec:aut-modular-curves}.  We may assume that $S$ is an affine scheme $\Spec R$ and that $E$ is given by a short Weierstrass equation
$$
E\colon y^2=x^3+a_4x+a_6\qquad
\text{($a_4,a_6\in R$, $4a_4^3+27a_6^2\ne0$)}
$$
with $\omega=\dd x/(2y)$.  We put
$$
A_\gamma(E\to S,C,\omega)=\sum_{P\in C_\gamma\setminus\{0\}} x(P) \in R.
$$
One can check that this construction defines a modular form of weight~$2$ on $X_0(n)$.


Next, for all $n$ in the set mentioned in the introduction, we will derive an explicit expression for~$A_\gamma$ in terms of Eisenstein series, where $\gamma$ is the matrix inducing the hyperelliptic involution fixed in~\S\ref{subsec:moduli-interpretation}.

If $n$ is not 40 or~48, the hyperelliptic involution of~$X_0(n)$ equals the Atkin--Lehner involution $w_d$ for some $d\mid n$ with $\gcd(d,n/d)=1$.  One can show that
\begin{equation}
A_\gamma=-2d\EisE_2^{(d)};
\label{eq:A-Elkies}
\end{equation}
see Elkies \cite[Section~3]{elk}.

In the remaining cases $n=40$ and $n=48$, the hyperelliptic involution of $X_0(n)$ is not an Atkin--Lehner involution.  In these cases we have $\gamma=\beta_{40}$ and $\gamma=\beta_{48}$, respectively, as defined in~\S\ref{subsec:moduli-interpretation}.
\begin{proposition}
\label{prop:isoexplan}
The $q$-expansions of the modular forms $A_{\beta_{40}}$ and~$A_{\beta_{48}}$ are
$$
\begin{aligned}
A_{\beta_{40}}(q)
&=40(\EisE_2^{(5)}(q)-3\EisE_2^{(10)}(q)+\EisE_2^{(20)}(q)), \\
A_{\beta_{48}}(q)
&=24(\EisE_2^{(3)}(q)-3\EisE_2^{(6)}(q)+\EisE_2^{(12)}(q)).
\end{aligned}
$$
\end{proposition}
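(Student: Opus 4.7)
The plan is to compute the $q$-expansion of $A_{\beta_n}$ directly from its moduli definition on the Tate curve, following the approach of Elkies, and to match the result against the proposed combination of Eisenstein series.

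First, using the recipe of \S\ref{subsec:aut-modular-curves} on the Tate curve $E_q = \mathbb{G}_m/q^{\Z}$ equipped with the canonical level structure $C = \mu_n$ and canonical differential $\omega = \dd u/u$, I would take the trivialization $\phi(x,y) = \zeta_{n\delta}^y\, q^{x/(n\delta)}$ of $E_q[n\delta]$ (so that $\phi(0,\delta) = \zeta_n$ generates $\mu_n = C$, as required).  A direct calculation then yields, in both cases,
\[
C_{\beta_n} = \langle \zeta_\delta\, q^{-1/2} \rangle,
\]
a cyclic subgroup of order $\delta = 4m$ (so $m = 5$ for $\beta_{40}$ and $m = 3$ for $\beta_{48}$), with $C_{\beta_n} \cap C = \mu_{2m}$.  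The elements of $C_{\beta_n}$ are $u_k := \zeta_\delta^k\, q^{-k/2}$ for $k = 0, 1, \ldots, \delta - 1$, and the even-$k$ elements make up the subgroup $\mu_{2m} \subset C$.

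Next I would insert these into the Tate-curve expansion of the $x$-coordinate on the short Weierstrass model,
\[
x(u) = \sum_{M \in \Z}\frac{q^M u}{(1 - q^M u)^2} - 2\EisE_2(q),
\]
and split $A_{\beta_n} = \sum_{k=1}^{\delta - 1} x(u_k)$ according to the parity of $k$.  The even-$k$ piece is $\sum_{P \in \mu_{2m} \setminus \{1\}} x(P) = -\delta\, \EisE_2^{(2m)}(q)$ by Elkies~\cite[Section~3]{elk}.  For the odd-$k$ piece, writing $k = 2\ell + 1$ and re-indexing the inner sum by $r = M - k/2 \in \Z - \tfrac{1}{2}$, the sum over $\ell \in \{0, \ldots, 2m - 1\}$ can be evaluated using the roots-of-unity identity
\[
\sum_{j=0}^{N-1}\frac{\zeta_N^j w}{(1 - \zeta_N^j w)^2} = \frac{N^2 w^N}{(1 - w^N)^2}
\]
with $N = 2m$ and $w = q^r \zeta_\delta$.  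Because $\zeta_\delta^{2m} = -1$, the resulting denominators become $1 + q^{2mr}$; after folding $r \mapsto -r$ to combine positive and negative $r$, and accounting for the $-2\EisE_2(q)$ term coming from each of the $2m$ odd values of $k$, one obtains
\[
\sum_{k \text{ odd}} x(u_k) = -4m\, \EisE_2(q) - 8 m^2 \sum_{n \geq 1,\ n \text{ odd}} \frac{q^{mn}}{(1 + q^{mn})^2}.
\]

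Finally, the expansion $x/(1+x)^2 = \sum_{k \geq 1} k(-1)^{k-1} x^k$ together with a coefficient comparison split by the $2$-adic valuation of the exponent yields the auxiliary Lambert identity
\[
\sum_{n \geq 1,\ n \text{ odd}}\frac{q^{mn}}{(1 + q^{mn})^2} = \EisE_2(q^m) - 5\EisE_2(q^{2m}) + 4\EisE_2(q^{\delta});
\]
the coefficients $1 - 5 + 4 = 0$ are consistent with the vanishing constant term on the left.  Substituting back, adding the even-$k$ piece $-\delta\EisE_2^{(2m)}(q) = -4m\EisE_2(q) + 8 m^2 \EisE_2(q^{2m})$, and regrouping all the $\EisE_2(q^d)$ terms into $\EisE_2^{(d)}$-series for $d \in \{m, 2m, \delta\}$, one recovers exactly $A_{\beta_n}(q) = 2\delta(\EisE_2^{(m)}(q) - 3\EisE_2^{(2m)}(q) + \EisE_2^{(\delta)}(q))$, as claimed.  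The technical heart of the argument is the auxiliary Lambert identity, where the correct combination $1, -5, 4$ has to be recognized; as a safety net, both sides of the proposition lie in the finite-dimensional space of weight-$2$ modular forms on $\Gamma_0(n)$, so the identity can alternatively be certified by matching a computable number of Fourier coefficients, which is a routine finite check.
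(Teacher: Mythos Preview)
Your proof is correct and follows the same strategy as the paper's: evaluate $A_{\beta_n}$ on the Tate curve, express the $x$-coordinates through the Lambert-series form of the $\wp$-function, collapse the roots-of-unity sums, and regroup the result as a combination of~$\EisE_2(q^d)$. The only difference is organisational: the paper packages the roots-of-unity sums into the auxiliary function $S_m(x)=m^2x^m/(1-x^m)^2-x/(1-x)^2$ and then splits the outer sum according to whether the $q$-exponent is an integer or a half-integer, whereas you first split $C_{\beta_n}$ by the parity of the index~$k$, recover the even-$k$ contribution directly from Elkies's formula for~$\mu_{2m}$, and treat the odd-$k$ half via the observation $\zeta_\delta^{2m}=-1$ together with your Lambert identity for $x/(1+x)^2$. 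The two decompositions are equivalent regroupings of the same double sum; your version has the mild advantage of handling $n=40$ and $n=48$ uniformly in the parameter~$m$, while the paper carries out the case $n=40$ in full and then appeals to an analogous computation for $n=48$.
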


\begin{proof}
The computation is inspired by Elkies's proof of~\eqref{eq:A-Elkies}.  We compute the $q$-expansion by evaluating $A_{\beta_{40}}$ on the Tate curve $\C/q^\Z$ over~$\C((q))$ with coordinate~$\zeta$, equipped with the standard subgroup $\mu_n$ and the differential $\dd\zeta/\zeta$.  The subgroup~$C_{\beta_{40}}$ is cyclic of order~20, generated by $q^{1/2}\zeta_{20}$.  The $x$-coordinates occurring in the definition of~$A_{\beta_{40}}$ can then be viewed as values of the Weierstrass $\wp$-function.  From the definition of~$A_{\beta_{40}}$ and the classical formula
$$
\wp(z;\tau) = (2\pi i)^2 \biggl( -2\EisE_2(q) + \sum_{n=-\infty}^\infty\frac{q^n\zeta}{(1-q^n\zeta)^2} \biggr),
$$
where $q=\exp(2\pi i \tau)$ and $\zeta=\exp(2\pi i z)$, we deduce
$$
\begin{aligned}
A_{\beta_{40}}(q) &= \sum_{\zeta\in C_{\beta_{40}}\setminus\{0\}} \biggl( -2\EisE_2(q) + \sum_{n\in\Z} \frac{q^n\zeta}{(1-q^n\zeta)^2} \biggr) \\
&= -38\EisE_2(q) + \sum_{\zeta\in C_{\beta_{40}}} \sum_{n\in\Z}\frac{q^n\zeta}{(1-q^n\zeta)^2}.
\end{aligned}
$$
It is a straightforward but tedious formal computation to show that
$$
A_{\beta_{40}}(q) = -38\EisE_2(q) + \sum_{n\in\Z} \biggl( S_{10}(q^n) - S_{10}(q^{n+1/2}) + S_{20}(q^{n+1/2}) \biggr),
$$
where for $m\ge1$ the rational function $S_m\in\Q(\zeta_m)(x)$ is defined by
$$
S_m=\sum_{k=1}^{m-1} \frac{\zeta_m^k x}{(1-\zeta_m^k x)^2}.
$$
By expanding in a Taylor series around $x=0$, we obtain
$$
S_m=m^2\sum_{l\ge1}lx^{lm}-\sum_{l\ge1}lx^l\in\Q[[x]],
$$
and hence
$$
S_m=m^2\frac{x^m}{(1-x^m)^2} - \frac{x}{(1-x)^2}\in\Q(x).
$$
By substituting $x=\exp(t)$ and expanding in a Taylor series around $t=0$, we get
$$
S_m(1)=\frac{1-m^2}{12}.
$$
Furthermore, it is easy to check that $S_m$ is invariant under replacing $x$ by~$1/x$.  We therefore obtain
$$
\begin{aligned}
A_{\beta_{40}} &= -38\EisE_2(q) - \frac{99}{12} + 2\sum_{n\ge1} S_{10}(q^n) - 2\sum_{n\ge0} S_{10}(q^{n+1/2}) + 2\sum_{n\ge0} S_{20}(q^{n+1/2}) \\
&= -38\EisE_2(q) -\frac{99}{12}
 + 2\sum_{n\ge1} \biggl( 100 \frac{q^{10n}}{(1-q^{10n})^2} - \frac{q^n}{(1-q^n)^2} \biggr) \\
&\qquad - 2\sum_{n\ge0} \biggl( 100 \frac{q^{10n+5}}{(1-q^{10n+5})^2} - \frac{q^{n+1/2}}{(1-q^{n+1/2})^2} \biggr) \\
&\qquad + 2\sum_{n\ge0} \biggl( 400 \frac{q^{20n+10}}{(1-q^{20n+10})^2} - \frac{q^{n+1/2}}{(1-q^{n+1/2})^2} \biggr) \\
&= -40\EisE_2(q) + 200\EisE_2(q^{10}) - 200 \sum_{n\ge0} \frac{q^{10n+5}}{(1-q^{10n+5})^2} + 800\sum_{n\ge0} \frac{q^{20n+10}}{(1-q^{20n+10})^2} \\
&= -40\EisE_2(q) + 200\EisE_2(q^{10}) - 200(\EisE_2(q^5)-\EisE_2(q^{10})) + 800(\EisE_2(q^{10})-\EisE_2(q^{20})) \\
&= -40\EisE_2(q) - 200\EisE_2(q^5) + 1200\EisE_2(q^{10}) - 800\EisE_2(q^{20}).
\end{aligned}
$$
which gives the first claimed equality; the second follows from the definition of $\EisE_2^{(d)}$.

Similarly, the hyperelliptic involution on $X_0(48)$ is induced by the action of the matrix $\beta_{48}=\smallmat{-6}{1}{-48}{6}$ of determinant~$12$ on~$\cC_{40}$ and on~$\cL_{40}$.  A calculation analogous to the one above gives the claimed identity.
\end{proof}

\section{Moduli interpretation of quadratic points}

\label{sec:moduli-interpretation}

Let $n$ be such that $X_0(n)$ is hyperelliptic of genus $\ge2$ and such that $J_0(n)(\Q)$ has rank~$0$.  The purpose of this section is to give a moduli interpretation of the quadratic points of $X_0(n)$.
In particular, we show that if $E$ is an elliptic curve over a quadratic field~$K$ admitting an $n$-isogeny, then $E$ is $K$-isogenous to a quadratic twist of its Galois conjugate, with finitely many exceptions up to $\Qbar$-isomorphism.  The following theorem summarizes the results of this section.

\begin{tm}
\label{thm:even-rank}
Let $K$ be a quadratic field, and let $E$ be an elliptic curve over~$K$ without complex multiplication coming from a non-exceptional point of~$X_0(n)(K)$.  Then there is a quadratic extension $L$ of~$K$ (which can be explicitly computed) such that the following holds.
\begin{enumerate}
\item The curve $E$ is a $\Q$-curve that is completely defined over $L$.
\item The curve $E$ acquires even rank over $L$.
\end{enumerate}
\end{tm}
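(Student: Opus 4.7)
I would combine the moduli interpretation of the hyperelliptic involution~$\iota$ on $X_0(n)$ (Section~\ref{sec:interpretation_hi}) with the explicit isogenies obtainable from the modular form $A_\gamma$ (Section~\ref{sec:modular_forms}) in order to produce the isogeny $E\to\lsup\sigma E$ and control its field of definition; the rank statement will then follow from a ``false CM'' analysis on a restriction-of-scalars abelian variety.

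First I would construct the isogeny. Let $P\in X_0(n)(K)$ be the non-exceptional point corresponding to~$E$; since $x(P)\in\Q$, we have $\iota(P)=\sigma(P)$, where $\sigma$ generates $\Gal(K/\Q)$. Fix $\gamma\in B(\Gamma_0(n))$ inducing~$\iota$ as in~\S\ref{subsec:moduli-interpretation}, and choose a $K$-rational pair $(E,C)$ representing the class of~$P$ (possible because $X_0(n)$ is only a coarse moduli space). The moduli interpretation of~$\gamma$ attaches to $(E,C)$ a canonical $K$-rational cyclic subgroup $C_\gamma\subset E$ of order $\delta=\det\gamma$; its defining polynomial is computable from $A_\gamma(E,\omega)$ via Elkies's algorithm~\cite[\S3]{elk}, and V\'elu's formulas then give an explicit $K$-rational isogeny $\phi\colon E\to E':=E/C_\gamma$. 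The equality $\iota(P)=\sigma(P)$ yields a $\Qbar$-isomorphism $E'\isom\lsup\sigma E$, which already exhibits $E$ as a $\Q$-curve. Since $E'$ and $\lsup\sigma E$ are both defined over~$K$ with the same $j$-invariant and $E$ has no CM, they are quadratic twists of one another: comparing their $c_4,c_6$ invariants extracts a well-defined class $d\in K^\times/(K^\times)^2$, and I set $L=K(\sqrt d)$. Over~$L$ the twist trivialises, so composing $\phi_L$ with the $L$-rational isomorphism $E'\isom\lsup\sigma E$ yields an $L$-rational isogeny $\lambda\colon E_L\to(\lsup\sigma E)_L$. This proves~(1).

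For~(2) I would deduce even rank via false CM in the sense of~\cite{bbdn} applied to $A=\Res_{K/\Q}E$. After base change to~$L$ one has $A_L\simeq E_L\times(\lsup\sigma E)_L$; since $\lambda$ identifies the two factors up to isogeny and $E$ has no CM, $\End_L(A_L)\otimes\Q\supseteq M_2(\Q)$. Combining $\lambda$ with its Galois conjugate (formed over the Galois closure of~$L/\Q$) and with the dual isogeny $\lambda^\vee$ produces a non-scalar element of $\End(A)$ defined over an appropriate intermediate field $F$ (for instance $F=\Q$ when $L/\Q$ is abelian, or a suitable quadratic subfield otherwise) satisfying a quadratic relation $X^2=-m$ with $m\in\Z_{>0}$, where the sign is secured by appropriate normalisation. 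This yields an embedding of $\Z[\sqrt{-m}]$ into $\End_F(A)$, forcing $E(L)\otimes\Q\simeq A(F)\otimes\Q$ to carry an action of an imaginary quadratic field, and hence to have even $\Q$-dimension.

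The main obstacle is the sign analysis in the final step: the composition $\lsup{\tilde\sigma}\lambda\circ\lambda$, which lands in $\End_L(E_L)=\Z$ by the no-CM hypothesis, could a priori produce real rather than imaginary multiplication, which would not force even rank in this way. Verifying that the sign works out uniformly---or that any ``bad-sign'' cases can be absorbed into the finite exceptional list---and identifying the precise intermediate field~$F$ over which the extra endomorphism descends requires either a conceptual argument from the structure of~$\gamma$ and~$C_\gamma$ or a finite case-by-case check across the values of~$n$ in the given set. The determination of~$L$ via the twist class~$d$ and the explicit computation of~$\lambda$ are, by contrast, essentially routine given the $q$-expansions of~$A_\gamma$ from Section~\ref{sec:modular_forms} and the models of~$X_0(n)$.
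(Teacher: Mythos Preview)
Your construction for part~(1) is essentially the paper's: produce the $K$-rational $\delta$-isogeny $E\to E/C_\gamma$ from the moduli interpretation of~$\gamma$, identify $E/C_\gamma$ with a quadratic twist of~$\lsup\sigma E$, and take $L=K(\sqrt\lambda)$ for the twist class~$\lambda$. The paper packages this as Proposition~\ref{prop:delta-isogeny}, with $\lambda$ written down explicitly in terms of~$g_4,g_6,c_4,c_6$.

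For part~(2), however, you are working much harder than necessary, and the ``main obstacle'' you flag is an artefact of the detour. The paper's argument is a two-line rank count: since $E'=(\lsup\sigma E)^{(\lambda)}$ is $K$-isogenous to~$E$ by Proposition~\ref{prop:delta-isogeny}, one has
\[
\rk E(L)=\rk E(K)+\rk E^{(\lambda)}(K)=\rk E(K)+\rk\lsup\sigma E(K)=2\rk E(K),
\]
using only that $L=K(\sqrt\lambda)$ and that $P\mapsto\lsup\sigma P$ gives an isomorphism $E(K)\cong\lsup\sigma E(K)$. No restriction of scalars, no endomorphism computation, and no sign issue arises.

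The false-CM picture you sketch is real---the paper records it in the Remark immediately following the proof---but note that even there your sign worry is misplaced: one obtains an action of $\Z[\sqrt m]$ on $E(L)$ with $m\in\{\delta,-\delta\}$, and \emph{either} sign forces even rank once $m$ is a nonsquare, since $E(L)\otimes\Q$ then becomes a vector space over the field $\Q(\sqrt m)$. So the case split you anticipate is not needed for the rank statement; it is only relevant if one wants to pin down the isomorphism type of the false-CM order.
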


\subsection{The moduli interpretation}

Let $\iota$ be the hyperelliptic involution on~$X_0(n)$, and let $\gamma$ be the element of~$B(\Gamma_0(n))$ chosen in \S\ref{subsec:moduli-interpretation} so that $\gamma$ induces the hyperelliptic involution~$\iota$.  We put
$$
\delta=\det\gamma.
$$
We fix a non-zero meromorphic differential $\alpha$ on~$X_0(n)$ satisfying
$$
\iota^*\alpha=-\alpha.
$$
Let $f_\alpha$ be the cusp form of weight~2 corresponding to~$\alpha$ as in \S\ref{subsec:modform-prelim}.  Then we can uniquely express $\EisE_4$ and~$\EisE_6$ as
$$
\EisE_4=g_4^{} f_\alpha^2
\quad\text{and}\quad
\EisE_6=g_6^{} f_\alpha^3,
$$
where $g_4$, $g_6$ are meromorphic functions on $X_0(n)$.

\begin{lemma}
\label{lemma:gamma-eis}
The action of $\gamma$ on $\EisE_4$ and~$\EisE_6$ is given by
$$
{\EisE_4}|_4\gamma = (-\delta)^2 \frac{\iota^* g_4}{g_4} \EisE_4
\quad\text{and}\quad
{\EisE_6}|_6\gamma = (-\delta)^3 \frac{\iota^* g_6}{g_6} \EisE_6.
$$
\end{lemma}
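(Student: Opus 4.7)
The plan is to reduce everything to the cusp form $f_\alpha$, using the compatibility formula \eqref{eq:ks-compat} and the multiplicativity of the slash action.

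First I would apply \eqref{eq:ks-compat} to the differential~$\alpha$. Since $\iota^*\alpha=-\alpha$ and $\det\gamma=\delta$, this gives
$$
f_\alpha|_2\gamma = \delta\, f_{-\alpha} = -\delta\, f_\alpha.
$$
Next I would observe that the slash action $|_k\gamma$ is multiplicative in the following sense: for modular forms $F_1$, $F_2$ of weights $k_1$, $k_2$, one has $(F_1 F_2)|_{k_1+k_2}\gamma = (F_1|_{k_1}\gamma)(F_2|_{k_2}\gamma)$, and on a meromorphic modular function $g$ (weight~$0$) the action is simply pullback by~$\iota$, i.e.\ $g|_0\gamma=\iota^*g$. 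Both facts follow immediately from the definition $(F|_k\gamma)(E,C,\omega)=F(\gamma(E,C,\omega))$ together with the fact that $\gamma$ induces $\iota$ on~$X_0(n)$.

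Applying this to the factorizations $\EisE_4 = g_4\, f_\alpha^2$ and $\EisE_6 = g_6\, f_\alpha^3$ yields
$$
\EisE_4|_4\gamma = (\iota^*g_4)\,(f_\alpha|_2\gamma)^2 = (\iota^*g_4)(-\delta)^2 f_\alpha^2
$$
and analogously for $\EisE_6$. Dividing out by the original factorizations $\EisE_4=g_4 f_\alpha^2$, $\EisE_6=g_6 f_\alpha^3$ (which is legitimate as an identity of meromorphic sections) then produces the stated formulas
$$
\EisE_4|_4\gamma = (-\delta)^2\,\frac{\iota^*g_4}{g_4}\,\EisE_4
\qquad\text{and}\qquad
\EisE_6|_6\gamma = (-\delta)^3\,\frac{\iota^*g_6}{g_6}\,\EisE_6.
$$

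There is no real obstacle here; the only point requiring a bit of care is the justification of the two properties of~$|_k\gamma$ used in the middle paragraph, and confirming that the Kodaira--Spencer-type compatibility \eqref{eq:ks-compat} applies to the meromorphic differential~$\alpha$ (not merely to holomorphic differentials). The former is a formal consequence of the definition of $|_k\gamma$ via its action on triples $(E,C,\omega)$; the latter follows because both sides of \eqref{eq:ks-compat} are $\C$-linear in~$\alpha$ and agree on the finite-dimensional space of holomorphic differentials, so the identity extends to meromorphic differentials by localization.
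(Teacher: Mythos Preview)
Your argument is correct and follows essentially the same approach as the paper's proof: both apply \eqref{eq:ks-compat} together with $\iota^*\alpha=-\alpha$ to obtain $f_\alpha|_2\gamma=-\delta f_\alpha$, and then use the multiplicativity of the slash action on the factorizations $\EisE_4=g_4 f_\alpha^2$ and $\EisE_6=g_6 f_\alpha^3$. Your version is simply more explicit about the multiplicativity and the weight-$0$ action, and adds a remark on extending \eqref{eq:ks-compat} to meromorphic~$\alpha$, but the substance is the same.
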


\begin{proof}
By~\eqref{eq:ks-compat} we have $f_\alpha|_2\gamma = \delta f_{\iota^*\alpha}$; our choice of~$\alpha$ implies $f_{\iota^*\alpha} = f_{-\alpha} = -f_\alpha$.  The claim now follows from the identities ${\EisE_4}|_4\gamma=(\iota^* g_4)(f_\alpha|_2\gamma)^2$ and ${\EisE_6}|_6\gamma=(\iota^* g_6)(f_\alpha|_2\gamma)^3$.
\end{proof}

For the rest of this section, we fix the following data.  Let $K$ be a quadratic field, and let $\sigma$ be the non-trivial element of $\Gal(K/\Q)$.  Let $E$ be an elliptic curve over~$K$ together with a cyclic subgroup~$C$ of order~$n$.  Let $P$ be the $K$-rational point of~$X_0(n)$ determined by $(E,C)$.  We assume that $E$ does not have complex multiplication, and furthermore that the $K$-rational point of $X_0(n)$ defined by the pair~$(E,C)$ is not in the finite set of exceptional quadratic points on $X_0(n)$.

We fix a non-zero global differential $\omega$ on~$E$; this gives rise to a (non-uniquely determined) Weierstrass equation and to the usual $c$-coefficients $c_4$ and $c_6$, which only depend on~$\omega$.  We define $\mu$ and $\lambda$ in~$K^\times$ by
$$
\mu = \frac{21 g_6(P) c_4}{10 g_4(P) c_6}
\quad\text{and}\quad
\lambda = -\delta\frac{\sigma(\mu)}{\mu}.
$$
We define an extension $L$ of~$K$ (of degree 1 or~2) by
$$
L=K(\sqrt{\lambda}).
$$
We note that $L$ can also be written as $K\bigl(\sqrt{-\delta\Norm_{K/\Q}(\mu)}\bigr)$ and is therefore either $K$ itself or a $V_4$-extension of~$\Q$.

Let $E'$ be an elliptic curve over~$K$, equipped with a non-zero global differential~$\omega'$ (or equivalently given by a short Weierstrass equation), such that the $c$-coefficients of~$(E',\omega')$ are
$$
c_4' = \lambda^2 \sigma(c_4)
\quad\text{and}\quad
c_6' = \lambda^3 \sigma(c_6).
$$
We note that $E'$ is a quadratic twist of the Galois conjugate $\lsup\sigma E$ of~$E$, namely $E'=(\lsup\sigma E)^{(\lambda)}$.

\begin{proposition}
\label{prop:delta-isogeny}
In the above situation, there exists a $\delta$-isogeny $\mu\colon E\to E'$ with kernel~$C_\gamma$ satisfying $\mu^*\omega'=\omega$.
\end{proposition}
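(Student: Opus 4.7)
The plan is to obtain the isogeny directly from the moduli interpretation of $\gamma$ in~\S\ref{subsec:aut-modular-curves}, and then to identify its target with the prescribed model $E'$ by matching $c$-invariants. The action of~$\gamma$ on the object $(E,C,\omega)$ of~$\cC_n$ produces by construction a new object $(E/C_\gamma,C',\omega'')$ together with the quotient $\delta$-isogeny $\phi\colon E\to E/C_\gamma$; by definition this $\phi$ has kernel~$C_\gamma$ and satisfies $\phi^*\omega''=\omega$. It therefore suffices to show that the Weierstrass model of $E/C_\gamma$ determined by~$\omega''$ has $c$-invariants equal to $\lambda^2\sigma(c_4)$ and~$\lambda^3\sigma(c_6)$; this will identify $(E/C_\gamma,\omega'')$ with $(E',\omega')$ over~$K$ via a unique isomorphism, and composing $\phi$ with this isomorphism will give the desired map~$\mu$.

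To carry this out I would translate to modular forms. Using the compatibility $(F|_k\gamma)(E,C,\omega)=F(\gamma(E,C,\omega))$ together with Lemma~\ref{lemma:gamma-eis}, I obtain
$$
c_4(E/C_\gamma,\omega'')=\delta^2\frac{g_4(\iota P)}{g_4(P)}c_4,
\qquad
c_6(E/C_\gamma,\omega'')=-\delta^3\frac{g_6(\iota P)}{g_6(P)}c_6.
$$
Because $P$ is a non-exceptional quadratic point, it has rational $x$-coordinate, so the hyperelliptic involution acts on~$P$ exactly as~$\sigma$ by the observation made in~\S\ref{subsec:finding-qp}; since $g_4$ and~$g_6$ are defined over~$\Q$, this yields $g_k(\iota P)=\sigma(g_k(P))$ for $k=4,6$.

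It then remains to match the resulting expressions with $\lambda^2\sigma(c_4)$ and $\lambda^3\sigma(c_6)$. The cleanest input is the identity $\mu=1/f_\alpha(E,C,\omega)$ for the scalar $\mu$ of the proposition, which follows immediately from the defining relations $\EisE_4=g_4^{}f_\alpha^2$ and $\EisE_6=g_6^{}f_\alpha^3$ together with the normalizations $c_4=240\,\EisE_4(E,\omega)$ and $c_6=504\,\EisE_6(E,\omega)$. With this in hand, the ratios $\mu^2 c_4/g_4(P)=240$ and $\mu^3 c_6/g_6(P)=504$ are both rational, hence $\sigma$-invariant; substituting $\lambda=-\delta\,\sigma(\mu)/\mu$ into the formulas above then yields the required equalities $c_4(E/C_\gamma,\omega'')=\lambda^2\sigma(c_4)$ and $c_6(E/C_\gamma,\omega'')=\lambda^3\sigma(c_6)$, completing the argument. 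The one delicate point is purely bookkeeping: one must use carefully that $\alpha$ was chosen with $\iota^*\alpha=-\alpha$, which is what produces the signs $(-\delta)^k$ in Lemma~\ref{lemma:gamma-eis} and the minus sign in the definition $\lambda=-\delta\,\sigma(\mu)/\mu$; once these are aligned, the two rationality checks are formal.
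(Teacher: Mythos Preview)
Your proof is correct and follows essentially the same route as the paper: reduce to matching $c$-invariants of $(E/C_\gamma,\omega'')$ with those of $(E',\omega')$, invoke Lemma~\ref{lemma:gamma-eis}, use $\iota(P)=\lsup\sigma P$ for non-exceptional points, and conclude via the relations $240\,g_4(P)=\mu^2 c_4$ and $504\,g_6(P)=\mu^3 c_6$. Your derivation of those last relations from $\mu=1/f_\alpha(E,C,\omega)$ is a slightly cleaner packaging than the paper's twist argument, but the content is the same.
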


\begin{proof}
Let $\omega''$ be the differential induced by~$\omega$ on $E/C_\gamma$, and let $c_4''$ and $c_6''$ be the $c$-coefficients  corresponding to $(E/C_\gamma,\omega'')$.  We have to prove that $(E/C_\gamma,\omega'')$ is isomorphic to $(E',\omega')$.  It is enough to show that $(c_4',c_6')=(c_4'',c_6'')$, or equivalently that each of the two modular forms $\EisE_4$ and~$\EisE_6$ takes the same value on $(E/C_\gamma,\omega'')$ and $(E',\omega')$.

There is some cyclic subgroup $C'$ in~$E/C_\gamma$ such that
$$
\gamma(E,C,\omega)=(E/C_\gamma,C',\omega'').
$$
Using Lemma~\ref{lemma:gamma-eis} and the fact that $\EisE_4$ only depends on $(E,\omega)$ and $g_4$ only depends on $P$, we get
$$
\begin{aligned}
\EisE_4(E/C_\gamma,\omega'') &= \EisE_4(E/C_\gamma,C',\omega'') \\
&= ({\EisE_4}|_4\gamma)(E,C,\omega) \\
&= (-\delta)^2 \biggl(\frac{\iota^* g_4}{g_4}\biggr)(P) \EisE_4(E,C,\omega) \\
&= (-\delta)^2 \frac{g_4(\iota(P))}{g_4(P)} \EisE_4(E,\omega).
\end{aligned}
$$
This implies
$$
c_4''=(-\delta)^2 \frac{g_4(\iota(P))}{g_4(P)} c_4.
$$
Noting that $g_4(\iota(P))=\sigma(g_4(P))$, we rewrite this as
$$
c_4''=(-\delta)^2 \frac{\sigma(g_4(P)/c_4)}{g_4(P)/c_4} \sigma(c_4).
$$
Likewise,
$$
c_6''=(-\delta)^3 \frac{\sigma(g_6(P)/c_6)}{g_6(P)/c_6} \sigma(c_6).
$$
The fact that $E$ defines the point~$P$ on $X_0(n)$ implies that $E$ is a twist of the curve defined by $y^2=x^3-5g_4(P)x-\frac{7}{12}g_6(P)$, so there exists $\mu\in K^\times$ such that
\begin{equation}
240 g_4(P)=\mu^2 c_4
\quad\text{and}\quad
504 g_6(P)=\mu^3 c_6.
\label{eq:mu}
\end{equation}
This implies
$$
c_4'' = \biggl(-\delta\frac{\sigma(\mu)}{\mu}\biggr)^2\sigma(c_4)
\quad\text{and}\quad
c_6'' = \biggl(-\delta\frac{\sigma(\mu)}{\mu}\biggr)^3\sigma(c_6).
$$
Furthermore, it follows from~\eqref{eq:mu} that $\mu$ can be expressed as
$$
\mu=\frac{21 g_6(P) c_4}{10 g_4(P) c_6},
$$
as claimed.
\end{proof}

\begin{proof}[Proof of Theorem~\ref{thm:even-rank}]
We take $L$ to be the field defined above.  The first claim follows from Proposition~\ref{prop:delta-isogeny} and our assumption that $E$ does not have complex multiplication.  To prove the second claim, we note that the curves $E^{(\lambda)}$ and $\lsup\sigma E$ are $K$-isogenous, which implies
$$
\begin{aligned}
\rk E(L) &= \rk E(K)+\rk E^{(\lambda)}(K) \\
&= \rk E(K) + \rk \lsup\sigma E(K).
\end{aligned}
$$
It remains to observe that $E(K)$ and $\lsup\sigma E(K)$ are isomorphic.
\end{proof}

\begin{remark}
One can in fact show that $E$ acquires ``false complex multiplication'' over~$L$, in the sense of~\cite{bbdn}.  More precisely, let $M=\Q$ if $L=K$, and let $M$ be one of the quadratic subfields of~$L$ other than $K$ itself if $[L:K]=2$.  Then there is an action of the ring $\Z[\sqrt{m}]$ on the Weil restriction $A=\Res_{L/M} E$ and hence on the group $A(M)=E(L)$, where $m$ is either $\delta$ or $-\delta$.  In particular, since $\delta$ is not a square in any of the cases we consider in this paper, $E(L)$ has even rank.  The rank of~$E$ will also be even over many extensions of $L$; see \cite{bn} for details.
\end{remark}


\begin{remark}
The question over which field the isogenies between a $\Q$-curve and its Galois conjugates are defined was studied from a somewhat different perspective by Gonz\'alez~\cite{gon}.  We leave it to the interested reader to compare the two approaches.
\end{remark}

\subsection{An example}

We now give an example to show how to explicitly compute the field of definition of an $n$-isogeny on an elliptic curve over a quadratic field corresponding to a non-exceptional quadratic point on~$X_0(n)$.  We take $n=22$ for simplicity, but the same method works in all cases.

The curve $X_0(22)$ has genus~2, and the Small Modular Curve database in Magma gives the equation
$$
X_0(22)\colon y^2 - x^3 y = -x^4 + 5 x^3 - 10 x^2 + 12 x - 8.
$$
The hyperelliptic involution~$\iota$ equals the Atkin--Lehner involution~$w_{11}$ and is induced by the matrix $\gamma=\smallmat{11}{5}{22}{11}$.  We fix the differential
$$
\alpha = \frac{\dd x}{(x - 2)(2y - x^3)}.
$$
Then we have
$$
\EisE_2^{(11)} = -\frac{5x^3+2x^2+12x+8}{12} f_\alpha.
$$
Furthermore,
$$
\EisE_4=g_4 f_\alpha^2
\quad\text{and}\quad
\EisE_6=g_6 f_\alpha^3,
$$
where $g_4$ and~$g_6$ are the rational functions defined by
$$
\begin{aligned}
240g_4 &= 120(-x^3 + 6x^2 + 4x + 8)y \\
&\qquad + 121x^6 - 484x^5 + 604x^4 - 352x^3 - 400x^2 + 2496x - 2240, \\
-504g_6 &= 36(-37x^6 + 236x^5 - 140x^4 + 1520x^3 + 368x^2 + 1408x - 448)y \\
&\qquad + 1331x^9 - 7986x^8 + 17304x^7 - 9832x^6 - 49632x^5 \\
&\qquad + 148704x^4 - 174720x^3 + 131712x^2 + 16128x - 179200.
\end{aligned}
$$

We consider the points with $x=-1$.  These are defined over the quadratic field~$K$ of discriminant~$-143$, and the points are $P=(-1,\beta)$ and $\iota(P)=\lsup\sigma P=(-1,-1-\beta)$, where $\beta^2+\beta+36=0$.  One of the elliptic curves in the family (consisting of quadratic twists) corresponding to the point $P$ is
$$
E\colon y^2 + xy + (1+\beta)y = x^3 - \frac{1}{2}x^2 + (74-28\beta)x + \frac{637-281\beta}{2}.
$$
(The class number of~$K$ is 10, and $E$ does not admit a global minimal model.)  The element $\mu\in K^\times$ happens to be $1$, so $E$ and $(\lsup\sigma E)^{(-11)}$, with $\sigma$ the non-trivial automorphism of~$K$, are related by an $11$-isogeny.  Furthermore, if $C\subset E$ is the canonical cyclic subgroup of order~$22$ and $\omega$ is the standard differential, the modular form~$A_\gamma$ takes the value $-77/6$ on $(E,C,\omega)$.  Using Elkies's algorithm, one computes that the kernel $C_\gamma$ of this $11$-isogeny is defined by the polynomial
$$
\begin{aligned}
P_{C_\gamma} &= x^5 + 6x^4 + (285+33\beta)x^3 - (1110+759\beta)x^2 \\
&\qquad + (40298-12496\beta)x - (13223+38324\beta).
\end{aligned}
$$
Using known algorithms, one can compute the rational functions defining the isogeny, but we will not write these down.

Finally, we note that both $E$ and its quadratic twist by $-11$ have rank~0; this can be shown by a 2-descent.  This implies that $E$ has rank~0 over~$L=K(\sqrt{-11})$, which is consistent with Theorem~\ref{thm:even-rank}.




\section{Exceptional points}
\label{sec:exceptions}

\subsection{Notation}

In our tables, $\Q(\sqrt d)$ is the field of definition of the exceptional elliptic curve, and $w$ denotes $\sqrt d$.  The coordinates denote the quadratic points on the given model of $X_0(n)$ that are not in $\phi^{-1}\{0\}(\Q)$ in the notation of Section~\ref{sec:qp}.  The ``CM'' column indicates whether the corresponding elliptic curves have complex multiplication; an entry $-D$ means that they have complex multiplication by the imaginary quadratic order of discriminant~$-D$.

As we are interested in the moduli interpretation of all the quadratic points on $X_0(n)$, it makes sense to determine the isogeny diagrams corresponding to the exceptional points. In our diagrams, the vertices are the points~$P$ on $X_0(n)$ instead of the corresponding elliptic curves with an $n$-isogeny. We use this notation as the points~$P$ take considerably less space to write down. We do not write down the functions that for a given point on $X_0(n)$ construct an elliptic curve with an $n$-isogeny; these functions are implemented in Magma.

Let $P_1$, $P_2$ be points on $X_0(n)$ forming an isogeny class whose isogeny diagram is of the form
$$
P_1 \stackrel{\textstyle n}{\vcenter{\hbox to 2.5em{\hrulefill}}} P_2.
$$
In this case, we say that the isogeny diagram is \emph{simple} and denote it by $\Si(P_1,P_2,n)$.

Let $P_1$, $P_2$, $P_3$, $P_4$ be points on $X_0(n)$ forming an isogeny class whose isogeny diagram is of the form
\begin{center}
\begin{tikzpicture}
\matrix(m)[matrix of math nodes,
row sep=2.6em, column sep=2.8em,
text height=1.5ex, text depth=0.25ex]
{P_1&P_2\\
P_3&P_4\\};
\path
(m-1-1) edge node[auto] {$a$} (m-1-2)
edge node[auto] {$b$} (m-2-1)
(m-1-2) edge node[auto] {$b$} (m-2-2)
(m-2-1) edge node[auto] {$a$} (m-2-2);
\end{tikzpicture}
\end{center}
where the degrees $a$ and $b$ of the isogenies satisfy $ab=n$.  In this case, we say that the isogeny diagram is a \emph{square} and denote it by $\SQ(P_1,P_2,P_3,P_4,a,b)$.

For completeness, we give data on the modular curves $X_0(n)$ for all $n$ in our list, even though some of them do not have exceptional points.

\FloatBarrier

\begin{table}
\caption{$X_0(22)$}
\begin{flushleft}
Model:
$y^2 + (-x^3)y = -x^4 + 5x^3 - 10x^2 + 12x - 8$

Genus: 2

Hyperelliptic involution: $w_{11}$

Group structure:
$J_0(22)(\Q)\simeq  \Z/5\Z \oplus \Z /5\Z$

Exceptional conjugacy classes of points:
\end{flushleft}
\setstretch{1.5}
\label{table:exceptional-22}
\begin{tabular}{cccc}
Name & $d$ & Coordinates & CM \\
\hline
$P_1$ & $-7$ & $\left(\frac 1 2(w - 1),\frac 1 2(w + 11)\right)$ & no\\
$P_2$ & $-7$ & $\left(\frac 1 2(w - 1),-w - 3\right)$ & no\\
$P_3$ & $-7$ & $\left(\frac 1 4(-w + 3),\frac 1 {16}(7w - 13)\right)$ & no\\
$P_4$ & $-7$ & $\left(\frac 1 4(-w + 3),\frac 1 4(-3w + 1)\right)$ & no\\
$P_5$  &$-7$ & $\left(\frac 1 2(-w + 1),\frac 1 2(w - 5)\right)$ & $-7$\\
$P_6$  &$-7$ & $\left(\frac 1 2(-w + 1),0\right)$ & $-7$\\
$P_7$ & $33$ & $\left(\frac 1 2(-w - 3),\frac 1 2(-3w - 13)\right)$ & no\\
$P_{8}$ & $33$ & $\left(\frac 1 2(-w - 3),-6w - 34 \right)$ & no\\
$P_{9}$ & $-47$ & $\left(\frac 1 4(w + 1),\frac 1 {16}(-7w + 1)\right)$ & no\\
$P_{10}$ & $-47$ & $\left(\frac 1 4(w + 1),\frac 1 4(-w - 9)\right)$ & no\\
$P_{11}$ & $-47$ & $\left(\frac 1 6(-w + 5),\frac 1 {27}(w - 41)\right)$ & no\\
$P_{12}$ & $-47$ & $\left(\frac 1 6(-w + 5),\frac 1 6(-w - 7)\right)$ & no\\
$P_{13}$ & $-2$ & $\left(w + 1,-1\right)$ & no\\
$P_{14}$ & $-2$ & $\left(w + 1,w - 4\right)$ & no\\
$P_{15}$ & $-2$ & $\left(\frac 2 3(-2w + 1),\frac 8 3(w - 2)\right)$ & no\\
$P_{16}$ & $-2$ & $\left(\frac 2 3(-2w + 1),\frac 8 {27}(w - 5)\right)$ & no\\
\end{tabular}

\medskip

Isogeny diagrams of non-CM points, up to conjugation:\\
$\SQ(P_1,P_2,P_3,P_4,11,2)$,
$\SQ(P_7,\lsup\sigma{P_7},P_{8},\lsup\sigma{P_{8}},11,2)$,
$\SQ(P_{9},P_{10},P_{11},P_{12},11,2)$,
$\SQ(P_{13},P_{14},P_{15},P_{16},11,2)$.
\begin{remark}
The points $P_7$ and $\lsup\sigma{P_7}$ are lifts of a $\Q$-point on $X_0(22)/w_2$, as are $P_8$ and $\lsup\sigma{P_8}$.
\end{remark}
\end{table}

\begin{table}
\caption{$X_0(23)$}
\begin{flushleft}
Model:
$y^2 + (-x^3 - x - 1)y = -2x^5 - 3x^2 + 2x - 2$

Genus: 2

Hyperelliptic involution: $w_{23}$

Group structure:
$J_0(23)(\Q)\simeq  \Z/11\Z$

Exceptional conjugacy classes of points:
\end{flushleft}
\setstretch{1.5}
\label{table:exceptional-23}
\begin{tabular}{cccc}
Name & $d$ & Coordinates & CM \\
\hline
$P_1$ & $-5$ & $\left(\frac 1 3(2w - 2),\frac 1 9(8w + 70) \right)$ & no\\
$P_2$ & $-5$ & $\left(\frac 1 3(2w - 2),\frac 1 {27}(-22w - 89) \right)$ & no\\
$P_3$ & $-7$ & $\left(\frac 1 4(-w + 3),\frac 1 4(-w - 1) \right)$ & no\\
$P_4$ & $-7$ & $\left(\frac 1 4(-w + 3),\frac 1 {16}(-5w + 23) \right) $ & no\\
$P_{5}$ & $-11$ & $\left(\frac 1 6(-w + 1),\frac 1 {54}(-19w + 49) \right) $ & no\\
$P_{6}$ & $-11$ & $\left(\frac 1 6(-w + 1),\frac 1 9(2w + 1) \right) $ & no\\
$P_{7}$ & $-15$ & $\left(\frac 1 4(w + 1),\frac 1 {16}(-3w + 5) \right) $ & no\\
$P_{8}$ & $-15$ & $\left(\frac 1 4(w + 1),\frac 1 4(w + 1) \right) $ & no\\

\end{tabular}

\medskip

Isogeny diagrams of non-CM points, up to conjugation:\\
$\Si(P_1,P_2,23)$, 
$\Si(P_3,P_4,23)$, 
$\Si(P_5,P_{6},23)$, 
$\Si(P_{7},P_{8},23)$. 

\end{table}

\begin{table}
\caption{$X_0(26)$}
\begin{flushleft}
Model:
$y^2 + (-x^3 - 1)y = -2x^5 + 2x^4 - 5x^3 +
    2x^2 - 2x$

Genus: 2

Hyperelliptic involution: $w_{26}$

Group structure:
$J_0(26)(\Q)\simeq  \Z/21\Z$

Exceptional conjugacy classes of points:
\end{flushleft}
\setstretch{1.5}
\label{table:exceptional-26}
\begin{tabular}{cccc}
Name & $d$ & Coordinates & CM \\
\hline
$P_1$ & $-3$ & $\left( \frac 1 2(w + 1),-1  \right)$ & $-3$\\
$P_2$ & $-3$ & $\left( \frac 1 2(w + 1),1  \right)$ & $-12$\\
$P_3$ & $-11$ & $\left( \frac 1 2(w - 1),7   \right)$ & no\\
$P_4$ & $-11$ & $\left( \frac 1 6(-w - 1),\frac 1 {27}(7w + 28)   \right)$ & no\\
$P_5$ & $-11$ & $\left( \frac 1 6(-w - 1),\frac 1 9(-2w + 1)   \right)$ & no\\
$P_6$ & $-11$ & $\left( \frac 1 2(w - 1),-w - 2   \right)$ & no\\
$P_7$ & $-23$ & $\left( \frac 1 6(-w + 1),\frac 1 {27}(w + 2)   \right)$ & no\\
$P_{8}$ & $-23$ & $\left( \frac 1 4(w + 1),\frac 1 {16}(-w + 3)   \right)$ & no\\
$P_{9}$ & $-23$ & $\left( \frac 1 4(w + 1),\frac 1 4(-w - 1)   \right)$ & no\\
$P_{10}$ & $-23$ & $\left( \frac 1 6(-w + 1),\frac 1 {18}(w + 11)   \right)$ & no\\
$P_{11}$ & $-1$ & $\left(w,1\right)$ & $-4$\\
$P_{12}$ & $-1$ & $\left(w,-w\right)$ & $-16$\\
\end{tabular}

\medskip

Isogeny diagrams of non-CM points, up to conjugation:\\
$\SQ(P_3,P_4,P_5,P_6,2,13)$, 
$\SQ(P_7,P_{8},P_{9},P_{10},2,13)$. 

\end{table}

\begin{table}
\caption{$X_0(28)$}
\begin{flushleft}
Model:
$y^2 + (-2x^3 + 3x^2 - 3x)y = x^4 - 3x^3 + 4x^2 - 3x + 1$

Genus: 2

Hyperelliptic involution: $w_{7}$

Group structure:
$J_0(28)(\Q)\simeq  \Z/6\Z \oplus \Z/6\Z$

Exceptional conjugacy classes of points:
\end{flushleft}
\setstretch{1.5}
\label{table:exceptional-28}
\begin{tabular}{cccc}
\label{tab:28}
Name & $d$ & Coordinates & CM \\
\hline
$P_1$ & $-3$ & $\left( \frac 1 2(w + 1),0  \right)$ & $-12$\\
$P_2$ & $-3$ & $\left( \frac 1 2(w + 1),1 \right)$ & $-12$\\
$P_3$ & $-7$ & $\left( \frac 1 2(-w + 1),\frac 1 2(w + 1) \right)$ & $-7$\\
$P_4$ & $-7$ & $\left( \frac 1 4(w + 1),\frac 1 8(w + 5) \right)$ & $-7$\\
$P_5$ & $-7$ & $\left( \frac 1 4(-w + 3),\frac 1 8(-w + 3) \right)$ & $-28$\\
$P_{6}$ & $-23$ & $\left( \frac 1 4(w + 1),
\frac 1 8(-w + 19) \right)$ & no\\
$P_{7}$ & $-23$ & $\left( \frac 1 6(-w + 1),\frac 1 {54}(w + 29) \right)$ & no\\
$P_{8}$ & $-23$ & $\left( \frac 1 8(-w + 5),\frac 1 {64}(-3w + 7) \right)$ & no\\
$P_{9}$ & $-23$ & $\left( \frac 1 6(w + 5),\frac 1 {54}(-w + 25) \right)$ & no\\
$P_{10}$ & $-23$ & $\left( \frac 1 4(-w + 3),\frac 1 8(w - 11) \right)$ & no\\
$P_{11}$ & $-23$ & $\left( \frac 1 8(w + 3),\frac 1 {64}(3w + 57) \right)$ & no\\
$P_{12}$ & $-23$ & $\left( \frac 1 4(w + 1),\frac 1 8(-w + 3) \right)$ & no\\
$P_{13}$ & $-23$ & $\left( \frac 1 6(-w + 1),\frac 1 6(-w + 7) \right)$ & no\\
$P_{14}$ & $-23$ & $\left( \frac 1 8(-w + 5),\frac 1 {16}(-w + 13) \right)$ & no\\
$P_{15}$ & $-23$ & $\left( \frac 1 6(w + 5),\frac 1 6(w - 1) \right)$ & no\\
$P_{16}$ & $-23$ & $\left( \frac 1 8(w + 3),\frac 1 {16}(w + 3) \right)$ & no\\
$P_{17}$ & $-23$ & $\left( \frac 1 4(-w + 3),\frac 1 8(w + 5) \right)$ & no\\
\end{tabular}

\medskip

Isogeny diagrams of non-CM points, up to conjugation:\\

\medskip
\begin{tabular}{cccc}
\begin{tikzpicture}[%
  back line/.style={densely dotted},
  cross line/.style={preaction={draw=white, -,line width=6pt}},
  node distance=2.5cm]
  \node (A) {$P_6$};
  \node [right of=A] (B) {$P_8$};
  \node [below of=A, node distance =2cm] (C) {$P_{12}$};
  \node [right of=C, node distance =2.5cm] (D) {$P_{14}$};
  \node (A1) [right of=A, above of=A, node distance=1.25cm] {$P_{10}$};
  \node [below of=A1, node distance =2cm] (C1) {$P_{16}$};
  \draw[dashed] (C1) -- (A1);
  \draw[dashed] (C) -- (A);
  \draw[cross line] (C) -- (C1) --(D)--(C);
  \draw[cross line](B) -- (A1) -- (A)  -- (B);
  \draw[dashed] (D) -- (B);
\end{tikzpicture}
& & &
\begin{tikzpicture}[%
  back line/.style={densely dotted},
  cross line/.style={preaction={draw=white, -,line width=6pt}},
  node distance=2.5cm]
  \node (A) {$P_7$};
  \node [right of=A] (B) {$P_9$};
  \node [below of=A, node distance =2cm] (C) {$P_{13}$};
  \node [right of=C, node distance =2.5cm] (D) {$P_{15}$};
  \node (A1) [right of=A, above of=A, node distance=1.25cm] {$P_{11}$};
  \node [below of=A1, node distance =2cm] (C1) {$P_{17}$};
  \draw[dashed] (C1) -- (A1);
  \draw[dashed] (C) -- (A);
  \draw[cross line] (C) -- (C1) --(D)--(C);
  \draw[cross line](B) -- (A1) -- (A)  -- (B);
  \draw[dashed] (D) -- (B);
\end{tikzpicture}
\end{tabular}\\
In these diagrams the dashed lines represent $7$-isogenies, while the full lines represent $4$-isogenies.
\end{table}

\begin{table}
\caption{$X_0(29)$}
\begin{flushleft}
Model:
$y^2 + (-x^3 - 1)y = -x^5 - 3x^4 + 2x^2 + 2x - 2$

Genus: 2

Hyperelliptic involution: $w_{29}$

Group structure:
$J_0(29)(\Q)\simeq  \Z/7\Z$

Exceptional conjugacy classes of points:
\end{flushleft}
\setstretch{1.5}
\label{table:exceptional-29}
\begin{tabular}{cccc}
Name & $d$ & Coordinates & CM \\
\hline
$P_1$ & $-1$ & $\left( w - 1,2w + 4  \right)$ & no\\
$P_2$ & $-1$ & $\left( w - 1,- 1  \right)$ & no\\
$P_3$ & $-7$ & $\left( \frac 1 4(w + 1),\frac 1 {16}(-11w - 7)  \right)$ & no\\
$P_4$ & $-7$ & $\left( \frac 1 4(w + 1),\frac 1 8(5w + 9)  \right)$ & no\\
\end{tabular}

\medskip

Isogeny diagrams of non-CM points, up to conjugation:\\
$\Si(P_1,P_2,29)$, 
$\Si(P_3,P_4,29)$. 
\end{table}

\begin{table}
\caption{$X_0(30)$}
\begin{flushleft}
Model:
$ y^2 + (-x^4 - x^3 - x^2)y = 3x^7 + 19x^6 +
    60x^5 + 110x^4 + 121x^3 + 79x^2 + 28x + 4$

Genus: 3

Hyperelliptic involution: $w_{15}$

Group structure:
$J_0(30)(\Q)\simeq  \Z/2\Z\oplus\Z/4\Z\oplus\Z/24\Z$

Exceptional conjugacy classes of points:
\end{flushleft}
\setstretch{1.5}
\label{table:exceptional-30}
\begin{tabular}{cccc}
Name & $d$ & Coordinates & CM \\
\hline
$P_1$ & $5$ & $\left( -w - 3,71w + 159  \right)$ & $-15$\\
$P_2$ & $5$ & $\left( \frac 1 2(-w - 3),4w + 9  \right)$ & $-60$\\
$P_3$ & $-7$ & $\left( \frac 1 2(-w - 3),w - 3  \right)$ & no\\
$P_4$ & $-7$ & $\left( \frac 1 4(w - 3),\frac 1 {32}(5w + 9)  \right)$ & no\\
$P_5$ & $-7$ & $\left( \frac 1 4(-w - 3),\frac 1 {16}(5w - 9)  \right)$ & no\\
$P_6$ & $-7$ & $\left( \frac 1 2(w - 3),\frac 1 2(w - 15)  \right)$ & no\\
\end{tabular}

\medskip

Isogeny diagrams of non-CM points, up to conjugation:\\

\begin{tikzpicture}[%
  back line/.style={densely dotted},
  cross line/.style={preaction={draw=white, -,line width=6pt}}, node distance =3cm]
  \node (A) {$P_3$};
  \node [right of=A] (B) {$P_5$};
  \node [below of=A] (C) {$P_4$};
  \node [right of=C] (D) {$P_6$};
  \node (A1) [right of=A, above of=A, node distance=1cm] {$\lsup\sigma{P_4}$};
  \node [right of=A1] (B1) {$\lsup\sigma{P_6}$};
  \node [below of=A1] (C1) {$\lsup\sigma{P_3}$};
  \node [right of=C1] (D1) {$\lsup\sigma{P_5}$};
  \draw[back line] (D1) -- (C1) -- (A1);
  \draw[back line] (C) -- (C1);
  \draw[cross line] (D1) -- (B1) -- (A1) -- (A)  -- (B) -- (D) -- (C) -- (A);
  \draw (D) -- (D1) -- (B1) -- (B);
\end{tikzpicture}\\
In this diagram, the horizontal lines are $5$-isogenies, the vertical lines 2-isogenies and the diagonal lines $3$-isogenies.

\begin{remark}
All the curves in the diagram are $\Q$-curves and are $6$-isogenous to their Galois conjugates and arise from rational points on the curve $X_0(30)/w_6$.
\end{remark}
\end{table}

\begin{table}
\caption{$X_0(31)$}
\begin{flushleft}
Model:
$y^2 + (-x^3 - x - 1)y = -2x^5 + x^4 + 4x^3 - 3x^2 - 4x - 1$

Genus: 2

Hyperelliptic involution: $w_{31}$

Group structure:
$J_0(31)(\Q)\simeq  \Z/5\Z$

Exceptional conjugacy classes of points:
\end{flushleft}
\setstretch{1.5}
\begin{tabular}{cccc}
Name & $d$ & Coordinates & CM \\
\hline
$P_1$ & $-3$ & $\left( \frac 1 2(w - 1),-2  \right)$ & no\\
$P_2$ & $-3$ & $\left( \frac 1 2(w - 1),\frac 1 2 (w + 7)  \right)$ & no\\

\end{tabular}

\medskip

Isogeny diagrams of non-CM points, up to conjugation:\\
$\Si(P_1,P_2, 31)$.

\end{table}

\begin{table}
\caption{$X_0(33)$}
\begin{flushleft}
Model:
$y^2 + (-x^4 - x^2 - 1)y = 2x^6 - 2x^5 + 11x^4 - 10x^3 + 20x^2 - 11x + 8$

Genus: 3

Hyperelliptic involution: $w_{11}$

Group structure:
$J_0(33)(\Q)\simeq  \Z/10\Z\oplus\Z/10\Z$

Exceptional conjugacy classes of points:
\end{flushleft}
\setstretch{1.5}
\label{table:exceptional-33}
\begin{tabular}{cccc}
Name & $d$ & Coordinates & CM \\
\hline
$P_1$ & $-2$ & $\left( -\frac 1 2w,\frac 1 4(-4w - 5)  \right)$ & no\\
$P_2$ & $-2$ & $\left( w - 1,-5w - 5  \right)$ & no\\
$P_3$ & $-2$ & $\left( -\frac 1 2w,w + 2  \right)$ & no\\
$P_4$ & $-2$ & $\left( w - 1,7w - 2  \right)$ & no\\
$P_5$ & $-2$ & $\left( w,-w + 1 \right)$ & $-8$\\
$P_6$ & $-2$ & $\left( w,w + 2 \right)$ & $-8$\\
$P_7$ & $-7$ & $\left( \frac 1 4(-3w + 1),\frac 1 {32}(9w + 93) \right)$ & no\\
$P_{8}$ & $-7$ & $\left( \frac 1 2(w + 1),-1 \right)$ & no\\
$P_{9}$ & $-7$ & $\left( \frac 1 4(-3w + 1),\frac 1 4(9w + 33) \right)$ & no\\
$P_{10}$ & $-7$ & $\left( \frac 1 2(w + 1),-w + 1 \right)$ & no\\
$P_{11}$ & $-11$ & $\left( \frac 1 2(-w + 1),w + 1 \right)$ & $-11$\\

\end{tabular}

\medskip

Isogeny diagrams of non-CM points, up to conjugation:\\
$\SQ(P_1,P_2,P_3,P_4, 3, 11)$, 
$\SQ(P_7,P_8,P_9,P_{10}, 3, 11)$. 
\end{table}

\begin{table}
\caption{$X_0(35)$}
\begin{flushleft}
Model:
$y^2 + (-x^4 - x^2 - 1)y = -x^7 - 2x^6 - x^5 - 3x^4 + x^3 - 2x^2 + x$

Genus: 3

Hyperelliptic involution: $w_{35}$

Group structure:
$J_0(35)(\Q) \simeq \Z/2\Z\oplus\Z/24\Z$

Exceptional conjugacy classes of points:
\end{flushleft}
\setstretch{1.5}
\label{table:exceptional-35}
\begin{tabular}{cccc}
Name & $d$ & Coordinates & CM \\
\hline
$P_1$ & $5$ & $\left( \frac 1 2(-w - 1),w + 3  \right)$ & $-35$\\
\end{tabular}

\medskip

Isogeny diagrams of non-CM points, up to conjugation:\\

\end{table}

\begin{table}
\caption{$X_0(39)$}
\begin{flushleft}
Model:
$y^2 + (-x^4 - x^3 - x^2 - x - 1)y = -2x^7 + 2x^5 - 7x^4 + 2x^3 - 2x$

Genus: 3

Hyperelliptic involution: $w_{39}$

Group structure:
$J_0(39)(\Q) \simeq \Z/2\Z\oplus\Z/28\Z$

Exceptional conjugacy classes of points:
\end{flushleft}
\setstretch{1.5}
\label{table:exceptional-39}
\begin{tabular}{cccc}
Name & $d$ & Coordinates & CM \\
\hline
$P_1$ & $-3$ & $\left( \frac 1 2(-w + 1),\frac 1 2(-w + 1)  \right)$ & $-3$\\
$P_2$ & $-3$ & $\left( \frac 1 2(-w + 1),-1  \right)$ & $-27$\\
$P_3$ & $-7$ & $\left( \frac 1 4(-w + 3),\frac 1 32(-21w + 7)  \right)$ & no\\
$P_4$ & $-7$ & $\left( \frac 1 4(w + 3),\frac 1 8(3w + 1)  \right)$ & no\\

\end{tabular}

\medskip

Isogeny diagrams of non-CM points, up to conjugation:\\
$\SQ(P_3,\lsup\sigma{P_3}, P_4, \lsup\sigma{P_4}, 3, 13)$.

\begin{remark}
The points $P_3$ and $\lsup\sigma{P_3}$ are
lifts of a $\Q$-point on $X_0(39)/w_3$, as are $P_4$ and $\lsup\sigma{P_4}$.
\end{remark}

\end{table}

\begin{table}

\caption{$X_0(40)$}
\begin{flushleft}
Model:
$y^2 + (-x^4 - 1)y = 2x^6 - x^4 + 2x^2$

Genus: 3

Hyperelliptic involution: induced by $\beta_{40} = \smallmat{-10}{1}{-120}{10}$

Group structure:
$J_0(40)(\Q) \simeq \Z/12\Z\oplus\Z/12\Z$

Exceptional conjugacy classes of points:
\end{flushleft}
\setstretch{1.5}
\label{table:exceptional-40}
\begin{tabular}{cccc}
\label{tab:40}
Name & $d$ & Coordinates & CM \\
\hline
$P_1$ & $-1$ & $\left( w,2w + 1   \right)$ & $-16$\\
$P_2$ & $-1$ & $\left( w,-2w + 1   \right)$ & $-16$\\

\end{tabular}

Isogeny diagrams of non-CM points, up to conjugation:\\

\end{table}

\begin{table}
\caption{$X_0(41)$}
\begin{flushleft}
Model:
$y^2 + (-x^4 - x)y = -x^7 - 2x^6 + 2x^5 + 5x^4 + 2x^3 - 4x^2 - 5x - 2$

Genus: 3

Hyperelliptic involution: $w_{41}$

Group structure:
$J_0(41)(\Q) \simeq \Z/10\Z$

Exceptional conjugacy classes of points:
\end{flushleft}
\setstretch{1.5}
\label{table:exceptional-41}
\begin{tabular}{cccc}
Name & $d$ & Coordinates & CM \\
\hline
$P_1$ & $-1$ & $\left( \frac 1 2(-w - 1),\frac 1 4(-3w - 4)   \right)$ & no\\
$P_2$ & $-1$ & $\left( \frac 1 2(-w - 1),\frac 1 4(w + 1)   \right)$ & no\\

\end{tabular}

Isogeny diagrams of non-CM points, up to conjugation:\\
$\Si(P_1,P_2, 41)$.
\end{table}

\begin{table}
\caption{$X_0(46)$}
\begin{flushleft}
Model:
$$
\begin{aligned}
y^2 + (-x^6 - x^5 - x^3 - 1)y &= -x^{11} + x^{10} +
    x^9 - 7x^8 + 21x^7 - 29x^6 \\
& \qquad + 33x^5 - 16x^4 + 6x^3 + 3x^2 + 2x - 2
\end{aligned}
$$
Genus: 5

Hyperelliptic involution: $w_{23}$

Group structure:
$$J_0(46)(\Q) \simeq \Z/11\Z \oplus \Z/22\Z$$

Exceptional conjugacy classes of points:
\end{flushleft}
\setstretch{1.5}
\label{table:exceptional-46}
\begin{tabular}{cccc}
Name & $d$ & Coordinates & CM \\
\hline
$P_1$ & $-7$ & $\left(\frac 1 2(w + 1), w + 3\right)$ & $-7$\\
$P_2$ & $-7$ & $\left(\frac 1 2(w + 1), \frac 1 2(w + 11)\right)$ & $-7$\\
\end{tabular}
\medskip

Isogeny diagrams of non-CM points, up to conjugation:\\

\end{table}

\begin{table}
\caption{$X_0(47)$}
\begin{flushleft}
Model:
$y^2 + (-x^5 - x^4 - x^3 - x^2 - 1)y = -2x^9 + 2x^8 - 7x^7 + 4x^6 - 5x^5 - 4x^4 + 7x^3 - 10x^2 + 7x - 3$

Genus: 4

Hyperelliptic involution: $w_{47}$

Group structure:
$J_0(47)(\Q)\simeq \Z /23 \Z$

No exceptional points.
\end{flushleft}
\end{table}

\begin{table}
\caption{$X_0(48)$}
\begin{flushleft}
Model:
$y^2 = x^8+14x^4+1$

Genus: 3

Hyperelliptic involution: induced by $\beta_{48} = \smallmat{-6}{1}{-48}{6}$

Group structure:
$J_0(48)(\Q) \simeq \Z/4\Z \oplus \Z/4\Z \oplus \Z/8\Z$

Exceptional conjugacy classes of points:
\end{flushleft}
\setstretch{1.5}
\label{table:exceptional-48}
\begin{tabular}{cccc}
Name & $d$ & Coordinates & CM \\
\hline
$P_1$ & $-1$ & $\left( w,4   \right)$ & $-16$\\
$P_2$ & $-1$ & $\left( w,-4   \right)$ & $-16$\\

\end{tabular}
\medskip

Isogeny diagrams of non-CM points, up to conjugation:\\

\end{table}

\begin{table}
\caption{$X_0(50)$}
\begin{flushleft}
Model:
$y^2 + (-x^3 - 1)y = -x^5 - 3x^3 - x$

Genus: 2

Hyperelliptic involution: $w_{50}$

Group structure:
$J_0(50)(\Q)\simeq \Z/15\Z$

Exceptional conjugacy classes of points:
\end{flushleft}
\setstretch{1.5}
\label{table:exceptional-50}
\begin{tabular}{cccc}
Name & $d$ & Coordinates & CM \\
\hline
$P_1$ & $-1$ & $\left( w,-w   \right)$ & $-4$\\
$P_2$ & $-1$ & $\left( w,1   \right)$ & $-16$\\
$P_3$ & $-7$ & $\left( \frac 1 2(w - 1),3   \right)$ & no\\
$P_4$ & $-7$ & $\left( \frac 1 4(-w - 1),\frac 1 {16}(3w + 15)  \right)$ & no\\
$P_5$ & $-7$ & $\left( \frac 1 4(-w - 1),\frac 1 8(-w + 3)   \right)$ & no\\
$P_6$ & $-7$ & $\left( \frac 1 2(w - 1),\frac 1 2(-w + 1)  \right)$ & no\\

\end{tabular}

Isogeny diagrams of non-CM points, up to conjugation:\\
$\SQ(P_3,P_4,P_5,P_6,2,25)$
\end{table}

\begin{table}
\caption{$X_0(59)$}
\begin{flushleft}
Model:
$y^2 + (-x^6 - x^4 - x^2)y = -2x^{11} + 5x^{10} -
    7x^9 + 10x^7 - 16x^6 + 10x^5 - x^4 - 6x^3 + 5x^2 - x - 2$

Genus: 5

Hyperelliptic involution: $w_{59}$

Group structure:
$J_0(59)(\Q)\simeq \Z /29 \Z$

No exceptional points.
\end{flushleft}
\end{table}

\begin{table}
\caption{$X_0(71)$}
\begin{flushleft}
Model:
$$
\begin{aligned}
y^2 + (-x^7 - x^5 - x^4 - x^3 - 1)y
&= x^{13} - x^{12} - 10x^{11} - 20x^{10} - 7x^9 + 27x^8\\
&\qquad + 36x^7 - 31x^5 - 18x^4 + 7x^3 + 10x^2 + x - 3
\end{aligned}
$$
Genus: 6

Hyperelliptic involution: $w_{71}$

Group structure:
$J_0(71)(\Q)\simeq \Z/35\Z$

No exceptional points.
\end{flushleft}
\end{table}

\FloatBarrier


\begin{thebibliography}{00}


\bibitem{al}
A. O. L. Atkin and J. Lehner, \emph{Hecke operators on $\Gamma_0(m)$}, Math. Ann. \textbf{185} (1970), 134--160.

\bibitem{s5}
J. Bober, A. Deines, A. Klages-Mundt, B. LeVeque, R.~A.~Ohana, A. Rabindranath, P. Sharaba, W. Stein, \emph{A Database of Elliptic Curves over $\Q(\sqrt{5})$---First Report}, Proceedings of the Tenth Algorithmic Number Theory Symposium, ed. by E.~W. Howe and K.~S. Kedlaya (2012)

\bibitem{bar}
F. Bars, \textit{Bielliptic modular curves}, J. Number Theory \textbf{76} (1999), 154--165.

\bibitem{mag}
W.\ Bosma, J.~J.\ Cannon, C.\ Fieker, A.\ Steel (eds.), Handbook of Magma functions, Edition 2.19 (2013).

\bibitem{bbdn}
J. G. Bosman, P. J. Bruin, A. Dujella and F. Najman, \emph{Ranks of elliptic curves with prescribed torsion over number fields}, Int. Math. Res. Not. IMRN \textbf{2014} (2014), 2885--2923.

\bibitem{bn}
P. J. Bruin and F. Najman, \textit{The growth of the rank of Abelian varieties upon extensions}, Ramanujan J., electronically published on 5 December 2014.


\bibitem{elk}
N. D. Elkies, \emph{Elliptic and modular curves over finite fields and related computational issues},  Computational perspectives on number theory (Chicago, IL, 1995), 21--76,
AMS/IP Stud. Adv. Math., \textbf{7}, Amer. Math. Soc., Providence, RI, 1998.


\bibitem{gal}
S. D. Galbraith, Equations for modular curves, D. Sc.\ thesis, University of Oxford, 1996.

\bibitem{gon-hyp}
J. Gonz\`alez Rovira, Equations of hyperelliptic modular curves, Annales de l'institut Fourier \textbf{41} (1991), 779--795.

\bibitem{gon}
J. Gonz\'alez, \emph{Isogenies of polyquadratic $\Q$-curves to their Galois conjugates}, Arch. Math. \textbf{77} (2001), 383--390.

\bibitem{jks}
D. Jeon, C.~H. Kim, A. Schweizer, \emph{On the torsion of elliptic curves over cubic number fields}, Acta Arith. \textbf{113} (2004), 291--301.

\bibitem{jkp}
D. Jeon, C.H. Kim, E. Park, \emph{On the torsion of elliptic curves over quartic number fields}, J. London Math. Soc. (2) \textbf{74} (2006), 1--12.

\bibitem{Kam1}
S. Kamienny, \emph{Torsion points on elliptic curves and $q$-coefficients of modular forms}, Invent. Math. \textbf{109} (1992), 221--229.

\bibitem{katz}
N. M. Katz, \emph{$p$-adic properties of modular schemes and modular forms}, Modular functions of one variable, III (Antwerp, 1972), 69--190, Lecture Notes in Math., \textbf{350}, Springer, Berlin, 1973.

\bibitem{ken1}
M.~A.~Kenku, \emph{The modular curve $X_0(39)$ and rational isogeny}, Math. Proc. Cambridge Philos. Soc. \textbf{85} (1979), 21--23.

\bibitem{ken2}
M.~A.~Kenku, \emph{The modular curves $X_0(65)$ and $X_0(91)$ and rational isogeny}, Math. Proc. Cambridge Philos. Soc. \textbf{87} (1980), 15--20.

\bibitem{ken3}
M.~A.~Kenku, \emph{The modular curve $X_0(169)$ and rational isogeny} J. London Math. Soc. (2) \textbf{22} (1980),  239--244.

\bibitem{ken4}
M.~A.~Kenku, \emph{On the modular curves $X_0(125)$, $X_1(25)$ and $X_1(49)$},
J. London Math. Soc. (2) \textbf{23} (1981), 415--427.

\bibitem{ken5}
M. Kenku, \emph{On the number of $Q$-isomorphism classes of elliptic curves in each $Q$-isogeny class}, J. Number Theory \textbf{15} (1982), 199--202.

\bibitem{km}
M.~A.~Kenku, F. Momose, \emph{Torsion points on elliptic curves defined over quadratic fields}, Nagoya Math. J. \textbf{109} (1988), 125--149.

\bibitem{maz1}
B. Mazur, \emph{Modular curves and the Eisenstein ideal},  Inst. Hautes \'Etudes Sci. Publ. Math. \textbf{47} (1978),  33--186.

\bibitem{maz2}
B. Mazur, \emph{Rational isogenies of prime degree}, Invent. Math. \textbf{44} (1978), 129--162.

\bibitem{mer}
L. Merel, \emph{Bornes pour la torsion des courbes elliptiques sur les corps de nombres}, Invent. Math. \textbf{124} (1996), 437--449.

\bibitem{naj}
F. Najman, \emph{Torsion of rational elliptic curves over cubic fields and sporadic points on $X_1(n)$}, Math. Res. Lett. \textbf{23} (2016), 245--272.

\bibitem{ogg}
A.~P.~Ogg, \emph{Hyperelliptic Modular Curves}, Bull. Soc. Math. France, \textbf{102} (1974), 449--462.


\bibitem{ste}
W. A. Stein, \emph{Explicit approaches to modular abelian varieties}, Ph.D. thesis, University of California, Berkeley (2000).

\bibitem{sto}
M. Stoll, \emph{Implementing $2$-descent for Jacobians of hyperelliptic curves}, Acta Arith. \textbf{98} (2001), 245--277.

\bibitem{velu}
J. V\'elu, \emph{Isog\'enies entre courbes elliptiques}, Comptes Rendus Acad. Sci. Paris A \textbf{273} (1971), 238--241.

\end{thebibliography}
\end{document}